\def\oo#1{\mathbin{{}_{(#1)}}}
\newtheorem{proposition}{Proposition}
\newtheorem{theorem}{Theorem}
\newtheorem{lemma}{Lemma}
\newtheorem{corollary}{Corollary}
\theoremstyle{definition}
\newtheorem{example}{Example}
\newtheorem{definition}{Definition}
\newtheorem{remark}{Remark}
\begin{document}

\title{Gr\"obner--Shirshov bases method for vertex algebras}

\thanks{The work was supported by the Russian Science Foundation (project 21-11-00286)}
\author{R.~A.~Kozlov$^{1,2)}$, P.~S.~Kolesnikov$^{1)}$}

\address{$^{1)}$Sobolev Institute of Mathematics SB RAS}
\address{$^{2)}$Novosibirsk State University}

\begin{abstract}
In the paper we show how to apply the Gr\"obner--Shirshov bases (GSB) method for modules over an associative algebra
to the study of vertex algebras defined by generators and relations. We compute GSBs for a series of vertex algebras
and study the problem of embedding of a left-symmetric 
algebra into a vertex one preserving the normally ordered product.
\end{abstract}

\maketitle

\noindent
\smash{\raise 10pt\rightline{\sl 
Dedicated to the memory of Aleksnder Vasilievich Mikhalev}}

The notion of a vertex algebra in an explicit algebraic form was proposed in~\cite{Borcherds}, where vertex algebras were applied for constructing a representation of the largest 
sporadic simple group (Monster).
The language of vertex algebras is commonly said to be 
a formal way to describe the properties of operator product expansion (OPE) of chiral fields in 2-dimensional conformal field theory (CFT, \cite{BPZ}) in physics.
In this way, a bulky definition of a vertex algebra appears 
that is widely presented in literature 
(see, e.g., \cite{FLM, F-BZvi}).
On the other hand, a categorial approach to the notion of a vertex algebra developed in~\cite{BD}
(see also \cite{BK-Field, BK-etal})
shows that from the algebraic point of view 
vertex algebras are close to the well-known classes of Lie and Poisson algebras.

One of most common ways to define an algebraic system in a
given class is to fix its generators and relations between them.
This approach works well in varieties of algebraic systems, like  (semi)groups, associative or Lie algebras, etc., since 
in this case any free algebra belongs to the considered class.

Although vertex algebras do not form a variety in the classical sense, it is possible to construct a universal object in this category for the class of vertex algebras generated by a given set of elements with a fixed dominating condition on their mutual locality function~\cite{Roit1999}.
Hence, there is an option to define a vertex algebra by means of defining relations. In this way, the first priority is to solve 
the word problem, that is, to find a linear basis of such a vertex algebra.
A standard technique for studying a word problem is the Gr\"obner--Shirshov bases method and its variations (see, e.g., \cite{BFKK,BC-book}).
The same method may be applied to vertex algebras as we show in this note.

Throughout the paper,  $\mathbb Z_+$ denotes the set of 
non-negative integers, all linear spaces are over a 
field $\Bbbk $ of characteristic zero.

\section{Conformal and vertex algebras}

Recall that a {\em Lie conformal algebra} \cite{KacVA_beginners}
is a linear space $L$ equipped with
 a linear operator $T: L\to L$ and
 with a family of
 $n$-products $(\cdot \oo{n} \cdot)$, $n\in \mathbb Z_+$,
satisfying a series of axioms. The latter can
be stated in a compact form by means of the
generating function of the sequence
$\{(x\oo{n} y)\}_{n\in \mathbb Z_+}$ for $x,y\in L$.
Namely, for every $x,y\in L$ the expression
\[
(x\oo\lambda y) = \sum\limits_{n\in \mathbb Z_+}
\dfrac{\lambda ^n}{n!} \otimes _{\Bbbk [T]} (x\oo n y)
\]
(called the {\em $\lambda $-bracket} of $x$, $y$)
should be a polynomial in the formal variable $\lambda $, i.e.,
$(x\oo\lambda y) \in \Bbbk [T,\lambda ]\otimes_{\Bbbk [T]} L \cong L[\lambda ]$.
Given $x,y \in L$, the degree of $(x\oo\lambda y)$ with respect to $\lambda $
gives rise to the {\em locality function} $N_L(x,y) \in \mathbb Z_+$:
\[
 N_L(x,y) = \begin{cases}
             0, & (x\oo\lambda y) = 0, \\
             \deg_\lambda (x\oo\lambda y) +1, & \text{otherwise}.
            \end{cases}
\]
In other words, $N_L(x,y)$ is the smallest $N\in \mathbb Z_+$
such that $(x\oo n y)=0 $ for all $n\ge N$.

Other axioms of a Lie conformal algebra
include {\em sesquilinearity}
\[
(Tx\oo\lambda y) = -\lambda (x\oo\lambda y),
\quad
(x\oo\lambda Ty) = (T+\lambda)(x\oo\lambda y),
\]
{\em skew symmetry}
\[
(x\oo\lambda y ) + (y\oo{T+\lambda } x) = 0,
\]
and the conformal version of {\em Jacobi identity}
\begin{equation}\label{eq:JacobiConf}
(x\oo\lambda (y\oo\mu z))
-
 (y\oo\mu (x\oo\lambda z))
 = ((x\oo\lambda y)\oo{\lambda+\mu } z).
\end{equation}

To define a Lie conformal superalgebra it is enough
to claim that $L=L_0\oplus L_1$ is a $\mathbb Z_2$-graded
$\Bbbk [T]$-module, the $n$-products preserve the grading,
and the axioms of skew symmetry along with the Jacobi identity
are modified in accordance with the Kaplansky rule (see, e.g.,
\cite{MP2010}). In particular, the Jacobi identity turns into
\[
(x\oo\lambda (y\oo\mu z))
-
(-1)^{|x||y|} (y\oo\mu (x\oo\lambda z))
 = ((x\oo\lambda y)\oo{\lambda+\mu } z).
\]
Hereinafter, $|x|\in \{0,1\}$ stands for the parity of a homogeneous element $x\in L_0\cup L_1$.

A series of examples is provided by quadratic Lie conformal superalgebras \cite{Xu2000}.
Let us state their construction in a particular case.
Suppose $V$ is a {\em Novikov superalgebra}, i.e., 
a $\mathbb Z_2$-graded linear space
equipped with a bilinear product $(\cdot \circ \cdot)$ such that
\[
 (u\circ v)\circ w - u\circ (v\circ w) = (-1)^{|u||v|} ((v\circ u)\circ w - v\circ (u\circ w)),
\]
\[
 (u\circ v)\circ w = (-1)^{|v||w|}(u\circ w)\circ v,
\]
for all homogeneous $u,v,w\in V$.
Then the free $\Bbbk [T]$-module $L(V) = \Bbbk [T]\otimes V$
is a Lie conformal superalgebra relative to the operation given by
\[
 (1\otimes u)\oo\lambda (1\otimes v) =(-1)^{|u||v|}T\otimes (v\circ u) +\lambda \otimes (u\circ v
 + (-1)^{|u||v|}v\circ u),
 \quad
 u,v\in V.
\]

\begin{example}\label{exmp:VirConformal}
 Let $V=\Bbbk v$ be a 1-dimensional algebra with the operation $v\circ v = v$.
 Then $L(V)$ is the {\em Virasoro conformal algebra}, its structure is completely defined
 by the ``conformal square'' of $1\otimes v = v$:
 \[
  (v\oo\lambda v ) = (T+2\lambda )v.
 \]
\end{example}

\begin{example}
Let $V$ be the 3-dimensional Novikov algebra
 $V=\Bbbk v +\Bbbk u + \Bbbk w$ with multiplication
 $v\circ v = v$, $v\circ u = \dfrac{1}{2}u$, $u\circ v = u$, $u\circ u=w$, $w\circ v=w$,
 other products are zero.
The corresponding Lie conformal algebra $L(V)$ is known as the  {\em Schr\"odinger--Virasoro conformal
algebra}.
\end{example}

A linear space $V$ equipped with a bilinear product
denoted $x.y$, $x,y\in V$, is said to be a left-symmertic
(pre-Lie) algebra, if
\[
(x.y).z - x.(y.z) =(y.x).z - y.(x.z)
\]
for all $x,y,z\in V$.
The definition of a left-symmetric superalgebra
can be derived in the ordinary way.
If $1$ is the identity element in a left-symmetric
(super)algebra $V$ with a (even) derivation $T$ then
\[
x.1 = 1.x = x, \quad T(x.y) = T(x).y + x.T(y), \quad T(1)=0,
\]
for all $x,y\in V$.

A vertex algebra structure may be considered as a composite
of a Lie conformal algebra and a left-symmetric algebra with a derivation.
It is somehow similar
to the structure of a Poisson algebra which is a composite of commutative
and Lie algebras.

\begin{definition}[see \cite{BK-Field}]
A linear space $V$ equipped with a linear operator $T$,
a binary operation
$(x,y)\mapsto x.y$, and a $\lambda $-bracket
$(x,y)\mapsto (x\oo\lambda y)$, $x,y\in V$,
is said to be a {\em vertex algebra} if
\begin{itemize}
 \item[{\rm (V1)}] $(V, {.})$ is a unital left-symmetric algebra with a derivation $T$;
 \item[{\rm (V2)}] $V$ with respect to $T$ and $(\cdot \oo\lambda \cdot ) $
 is a Lie conformal algebra;
 \item[{\rm (V3)}] The following identities hold for all $x,y,z\in V$:
\begin{gather}
x.y -  y.x = \int\limits_{-T}^0 (x\oo\lambda y)\, d\lambda
\label{eq:VertexComm} \\
(x\oo\lambda y.z) = (x\oo\lambda y).z + y.(x\oo\lambda z)
+\int\limits_0^\lambda ((x\oo\lambda y)\oo\mu z)\, d\mu .
\label{eq:WickIdent}
\end{gather}
\end{itemize}
The definition of a vertex superalgebra can be easily derived
by means of the Kaplansky sign rule.
\end{definition}

The ordinary definition of a vertex algebra (see, e.g., \cite{FLM, F-BZvi})
via vertex operators $Y(\cdot, z)$, operator-valued series in a formal variable $z$,
corresponds to the following presentation:
\[
Y(x,z)y = \sum\limits_{n\in \mathbb Z_+} (x\oo n y)z^{-n-1}
+ \sum\limits_{s\in \mathbb Z_+} \dfrac{1}{s!}(T^sx.y) z^s,
\]
for $x,y\in V$. The expression in the right-hand side
is a Lawrent formal series in $z$ with coefficients from~$V$.
The identity element of $(V, {.})$ is denoted $\mathbf 1$
(or $|0\rangle $), it is known
as the {\em vacuum vector} of~$V$. The left-symmetric operation
$(\cdot .\cdot )$ is represented in this context by the normally ordered
product of vertex operators:
\[
 Y(x.y, z) = :Y(x,z)Y(y,z):, \quad x,y\in V
\]
(see, e.g., \cite{FLM, F-BZvi}).

Suppose $X$ is a nonempty set. The class of all vertex algebras $V$ generated by the set $X$
does not contain universal object since there is no upper bound
for the locality function $N_V$ on $X\times X$. However,
if we restrict the locality on the generators then such a universal
object may be constructed \cite{Roit1999}.

Let us fix a function $N:X\times X\to \mathbb Z_+$ and define
the Lie algebra $\mathcal L(X,N)$ generated by the set $X\times \mathbb Z$
presented as
$\{x(n) \mid x\in X, n\in \mathbb Z \} $
modulo the following defining relations:
\begin{equation}\label{eq:FreeLoc}
 \sum\limits_{s=0}^{N(x,y)} (-1)^s \binom {N(x,y)}{s} [x(n-s),y(m+s)] = 0,\quad x,y\in X,\ n,m\in \mathbb Z.
\end{equation}
The universal enveloping associative algebra $U(\mathcal L(X,N))$
is also generated (as an associative algebra) by $X\times \mathbb Z$ relative to the commutator relations \eqref{eq:FreeLoc}. Denote by $\mathcal A(X,N)$
the associative algebra $\Bbbk [T]\ltimes U(\mathcal L(X,N))$, where
$T$ acts on the universal envelope as a derivation $x(n)\mapsto -nx(n-1)$,
$x\in X$, $n\in \mathbb Z$.
Namely, $\mathcal A(X,N)$ as an associative algebra is generated by the set $(X\times \mathbb Z)\cup \{T\}$
modulo the defining relations \eqref{eq:FreeLoc} and
\[
 Tx(n) - x(n)T = -nx(n-1),\quad x\in X,\ n\in \mathbb Z.
\]

Let $\mathrm{Vert}(X,N)$ stand for the left $\mathcal A(X,N)$-module
generated by a single element~$\mathbf 1$ relative to the defining
relations
\begin{equation}\label{eq:Vacuum}
T\mathbf 1=0,\quad  x(n)\mathbf 1 = 0, \quad x\in X,\ n\in \mathbb Z_+.
\end{equation}
Then $\mathrm{Vert} (X,N)$ is the universal object in the class of all vertex
algebras $V$ generated by $X$ such that $N_V(x,y)\le N(x,y)$ for all $x,y\in X$.
The generators $x\in X$ correspond to the elements $x(-1)\mathbf 1 \in \mathrm{Vert}(X,N)$,
and, more general, if $a\in \mathrm{Vert}(X,N)$, $x\in X$, $n\in \mathbb Z$ then
\begin{equation}\label{eq:CoeffOperator}
 x(n)a = \begin{cases}
          (x\oo{n} a), & n\in \mathbb Z_+, \\
          x.a, & n=-1, \\
          -\dfrac{1}{n+1} ( T(x(n+1)a) - x(n+1)Ta), & n\le -2.
         \end{cases}
\end{equation}
For example, if $a= x(1)y(-1)\mathbf 1$ and $b=z(-1)\mathbf 1$
then
$a.b = (x\oo 1 y).z$. The latter may be calculated via \eqref{eq:WickIdent}:
compare the coefficients at $\lambda ^1$ in left- and right-hand sides
of \eqref{eq:WickIdent} to get
\[
 (x\oo1 (y.z)) =  (x\oo1 y).z + y.(x\oo 1 z ) + ((x\oo 0 y)\oo 0 z).
\]
The last summand transforms by \eqref{eq:JacobiConf}, so we obtain
\[
 (x\oo1 y).z = (x\oo1 (y.z)) - y.(x\oo 1 z ) - (x\oo 0 (y\oo 0 z)) + (y\oo 0 (x\oo 0 z)),
\]
i.e.,
\[
 a.b = [x(1),y(-1)]z(-1)\mathbf 1 - [x(0),y(0)]z(-1)\mathbf 1.
\]

In general, every formal expression in terms of vertex algebra operations
on the elements of $X$ may be rewritten by means
of (V1)--(V3) as a $\Bbbk [T]$-linear
combination of right-normed words in $X$ 
that can be easily represented
in the $\mathcal A(X,N)$-module $\mathrm{Vert}(X,N)$.

Indeed, assume $V$ is a vertex algebra generated by a set $X$.
Then define $V(n)$, $n\in \mathbb Z_+$, to be the subspace of $V$
spanned by all those monomials in $X$ that contain no more than
$n$ operations $(\cdot .\cdot)$. Then $V(0)\subseteq V(1)\subseteq \dots $,
$V(n).V(m)\subseteq V(n+m+1)$,
$(V(n)\oo{s} V(m)) \subseteq V(n+m)$,
and the graded space $\bigoplus_{n\ge 0} V(n)/V(n-1) $
is a Poisson vertex algebra in the sense of \cite{BSK}.
(Note that \eqref{eq:VertexComm} implies the left-symmetric operation
is commutative on the graded space, thus associative.)
As in the case of ``ordinary'' Poisson algebras, an expression
in a Poisson vertex algebra may be written in a right-normed form.
Hence, by induction on $n\ge 0$, every expression in $V(n)$
may be also written in a right-normed form with respect to
vertex algebra operations.
The base of induction is $n=0$, where the claim follows from \eqref{eq:JacobiConf}.

The same observations
lead us to

\begin{proposition}
Let $X$ be a set, and let $N$ be a fixed locality function on $X\times X$.
Then a subset $I\subseteq \mathrm{Vert}(X,N)$ is an ideal of the vertex algebra
if and only if $I$ is an $\mathcal A(X,N)$-submodule of
 $\mathrm{Vert}(X,N)$.
\end{proposition}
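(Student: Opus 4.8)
The plan is to prove both implications by translating the vertex-algebra operations into the $\mathcal A(X,N)$-action via \eqref{eq:CoeffOperator}. The forward implication is immediate: if $I$ is an ideal of the vertex algebra then $TI\subseteq I$, and taking one factor from $X\subseteq V$ gives $(x\oo n I)\subseteq I$ for $n\in\mathbb Z_+$ and $x.I\subseteq I$ for $x\in X$; by \eqref{eq:CoeffOperator} these are precisely the actions of the generators $x(n)$ ($x\in X$, $n\in\mathbb Z$) and of $T$, which generate $\mathcal A(X,N)$, so $I$ is a submodule.

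For the nontrivial direction I would fix an $\mathcal A(X,N)$-submodule $I$ and introduce the set
\[
W=\{\,b\in V : a.b,\ b.a\in I\ \text{and}\ (a\oo\lambda b),\ (b\oo\lambda a)\in I[\lambda]\ \text{for all}\ a\in I\,\},
\]
where $V=\mathrm{Vert}(X,N)$. The goal is $W=V$; since $V=\mathcal A(X,N)\mathbf 1$ is cyclic, it suffices to show $\mathbf 1\in W$ and that $W$ is an $\mathcal A(X,N)$-submodule, i.e. that $W$ is $\Bbbk$-linear, $T$-invariant, and closed under each generator action $b\mapsto y(m)b$. Linearity is clear; $\mathbf 1\in W$ follows from $\mathbf 1$ being the unit together with the vacuum bracket identities $(\mathbf 1\oo\lambda a)=(a\oo\lambda\mathbf 1)=0$; and $T$-invariance follows from sesquilinearity for the brackets and from the Leibniz rule $T(a.b)=(Ta).b+a.(Tb)$ together with $TI\subseteq I$ for the products. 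Throughout I will use two reductions: skew symmetry recovers $(b\oo\lambda a)=-(a\oo{-\lambda-T}b)$ from $(a\oo\lambda b)$ (the $T$-invariance of $I$ making the substitution $\lambda\mapsto-\lambda-T$ harmless), and \eqref{eq:VertexComm} recovers $b.a$ from $a.b$ and $(b\oo\lambda a)$, so in checking $b\in W$ it is enough to control $(a\oo\lambda b)$ and $a.b$.

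The heart of the argument is closure under the generator actions. By \eqref{eq:CoeffOperator} the actions for $m\le-2$ are expressed through $T$ and $y(m+1)$, so by $T$-invariance and downward induction it suffices to treat $m=-1$ and $m\ge 0$. For $m=-1$, i.e. $b\mapsto y.b$, I expand $(a\oo\lambda(y.b))$ by the Wick identity \eqref{eq:WickIdent} and $a.(y.b)$ by the left-symmetric identity (V1); every resulting summand has the form $(\text{element of }I).b$, $\ y.(\text{element of }I)$, or a bracket of an element of $I$ with $b$, all lying in $I$ because $b\in W$, because $x.I\subseteq I$ for generators, and because $(a\oo\lambda y)\in I[\lambda]$ (this last fact from skew symmetry and $(y\oo n I)\subseteq I$). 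For $m\ge 0$, i.e. $b\mapsto(y\oo m b)$, I first obtain $(a\oo\lambda(y\oo\mu b))\in I[\lambda,\mu]$ from the conformal Jacobi identity \eqref{eq:JacobiConf}, then $((y\oo\mu b)\oo\lambda a)$ by skew symmetry; to handle the product $(y\oo\mu b).a$ I read \eqref{eq:WickIdent} backwards, solving for $(y\oo\mu b).a$ in terms of $(y\oo\mu(b.a))$, $\ b.(y\oo\mu a)$, and $\int_0^\mu((y\oo\mu b)\oo\nu a)\,d\nu$, each of which lies in $I$ since $b\in W$ and $a\in I$; finally $a.(y\oo\mu b)$ follows from $(y\oo\mu b).a$ through \eqref{eq:VertexComm}.

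I expect the main obstacle to be exactly this last point: showing that a bracket $(y\oo m b)$ again lies in $W$, since the left-symmetric product of an element of $I$ with a $\lambda$-bracket is not governed directly by any single axiom. The resolving trick is to express this product through the Wick identity solved for the ``wrong'' factor, which reduces it to products and brackets already controlled by the hypothesis $b\in W$; the $T$-invariance of $I$ is what keeps all the $\lambda$-substitutions and $\mu$-integrations inside $I$. Once closure under every generator is established, $W$ is an $\mathcal A(X,N)$-submodule containing $\mathbf 1$, whence $W=V$, which is precisely the assertion that $I$ is an ideal of the vertex algebra.
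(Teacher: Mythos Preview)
Your proof is correct and takes a genuinely different route from the paper's. The paper argues the ``if'' direction by fixing $a\in I$ and an arbitrary $f\in V$, enlarging the generating set to $X'=X\cup\{a\}$ (with locality extended via the Dong Lemma), passing to $V'=\mathrm{Vert}(X',N')$, and then using the right-normed form of words in $V'$ to set up an induction on the length of the ``tail'' to the right of the letter $a$: the identities \eqref{eq:JacobiConf}, \eqref{eq:WickIdent}, \eqref{eq:VertexComm} and left symmetry are used to shorten the tail case by case according to the signs of adjacent indices. Your argument dualizes the induction: instead of pushing $a$ to the right through a word, you introduce the normalizer-like set $W$ and prove $W$ is an $\mathcal A(X,N)$-submodule containing $\mathbf 1$, so that cyclicity of $V$ finishes the proof. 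The identities invoked are the same, but your organization avoids the auxiliary vertex algebra $V'$ and the appeal to the Dong Lemma; it is more self-contained and arguably cleaner. The paper's approach, on the other hand, makes the reduction to right-normed words in $V'$ explicit, which ties in directly with the module picture of $\mathrm{Vert}(X,N)$ emphasized elsewhere in the text.
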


\begin{proof}
 The ``only if'' part is obvious from \eqref{eq:CoeffOperator}:
if $I$ is an ideal of the vertex algebra
 $\mathrm{Vert}(X,N)$
 then it is closed under the action of $x(n)$, $x\in X$, $n\in \mathbb Z$.

 For the ``if'' part, suppose $I$ is an $\mathcal A(X,N)$-submodule of
 $\mathrm{Vert}(X,N)$ and $a\in I$.
Let $X'=X\cup \{a\}$ be the extended set of generators, and let
$N'$ be a locality function on $X'\times X'$ extending $N$ in such
a way that
$N'(x,a)$, $N'(a,x)$, $N'(a,a)$ are given by the corresponding localities
in $\mathrm {Vert}(X,N)$, or their estimates according to the Dong Lemma.
The initial vertex algebra $\mathrm{Vert}(X,N) $ is a homomorphic image of
$V' = \mathrm{Vert}(X',N')$.

Then for every $f\in \mathrm{Vert }(X,N)$ the elements
$f.a$, $a.f$, $(f\oo s a)$, $(a\oo s f)$
may be considered as expressions in~$V'$.
As it was shown above, every element from $V'$ may be presented
as a $\Bbbk [T]$-linear combination of right-normed words
in $X'$. Consider such a word
\[
w = x_1(n_1)\dots x_i(n_i)a(m)x_{i+1}(n_{i+1})\dots x_k(n_k)\mathbf 1, \quad m,n_i\in \mathbb Z.
\]
If $i=k$ then, obviously, $w$ is obtained from $a$ by the action of $\mathcal A(X,N)$.
If $i<k$ then look at the pair $a(m)x_{i+1}(n_{i+1})$, denote it by $a(m)x(n)$, and let $w'$
stand for the ``tail'' of $w$. In the case when $m,n\ge 0$ we may apply \eqref{eq:JacobiConf}
to express $a(m)x(n)w' = x(n)a(m)w' - \sum\limits_{s\ge 0} \binom{n}{s} (x\oo s a)(n+m-s) w'$.
The first summand contains $a(m)w'$ with shorter ``tail'', so its image in $V$ belongs to $I$
by inductive reasons. The second group of summands also contains $(x\oo s a)=x(s)a \in I$ with a shorter ``tail'' $w'$.
Hence, $w\in I$. In the case when $m\ge 0$ and $n<0$ we may apply \eqref{eq:WickIdent} and \eqref{eq:VertexComm}
to express $a(m)x(n)w'$ in a similar form, as a $\mathcal A(X,N)$-linear combination of
$a_j(m_j)w_j'$ with $a_j\in I$,
with shorter ``tails'' $w_j'$. In the case when $m<0$ and $n\ge 0$, one may again apply
\eqref{eq:WickIdent} along with conformal skew symmetry to get $w\in I$.
Finally, if $n,m<0$ then left symmetry of the product $(\cdot.\cdot)$ along with \eqref{eq:VertexComm}
also leads us to the conclusion $w\in I$.
\end{proof}

Therefore, in order to present a vertex algebra $V$ via generators $X$ (with a locality
function $N$ on $X\times X$) and relations $R$
one has to consider the quotient $\mathrm{Vert}(X,N\mid R)$
of the free 1-generated module over $\mathcal A(X,N)$ relative to
\eqref{eq:Vacuum} and the relations $R$ represented via \eqref{eq:CoeffOperator}.

\begin{example}\label{exmp:WeylVertex}
 Let $X=\{x,y\}$, $N(x,x)=N(y,y)=0$, $N(x,y)=N(y,x)=1$, $R=\{(x\oo 0 y)-\mathbf 1\}$.
Then
\[
 \mathcal A(X,N) = \Bbbk \langle T, x(n),y(m) \mid n,m\in \mathbb Z\rangle /(S),
\]
where
$S$ consists of
\[
\begin{gathered}{}
[T,a(n)]= -na(n-1),\quad a\in \{x,y\}, \\
[x(n),x(m)]=[y(n),y(m)]=
 [x(n),y(m)]-[x(n-1),y(m+1)] =0,
 \quad n,m\in \mathbb Z.
\end{gathered}
\]
The vertex algebra
$W = \mathrm{Vert}(X,N\mid R)$
is the quotient of the $\mathcal A(X,N)$-module generated by the vaccum vector
$\mathbf 1$ relative to the relations \eqref{eq:Vacuum} and
\[
 x(0)y(-1)\mathbf 1  = \mathbf 1.
\]
Below we show that $W$ is exactly the Weyl vertex algebra \cite{AdamPedic19}.
\end{example}

In order to study the structure of a module over an associative algebra,
one may apply the Gr\"obner--Shirshov bases technique for modules \cite{KangLee2000}
which is briefly described in the next section.

\section{Gr\"obner--Shirshov bases for modules: application to vertex algebras}

The Gr\"obner--Shirshov bases (GSB) technique for associative algebras is very well
known and described
in a series of papers from various points of view
(see, e.g., \cite{BokChen-Bull, DotsTamaroff}).

Namely, let $\mathcal X$ be a set. Then the free associative algebra
$\Bbbk \langle \mathcal X\rangle $ generated by $\mathcal X$ is equal to $\Bbbk \mathcal X^*$,
the linear span of all words in~$\mathcal X$.

Assume $S\subset \Bbbk \langle \mathcal X\rangle $
is a set of nonzero polynomials presented in the form $f = \bar f - r(f)$,
where $\bar f$ is a selected word with identity coefficient (principal part of~$f$),
and $r(f)$
denotes the sum of all other terms in~$f$.
Then $S$ gives rise to an oriented graph (rewriting system)
$\Gamma = \Gamma (\mathcal X,S)$ defined as follows.
The vertices of $\Gamma $ are polynomials from $\Bbbk \langle \mathcal X\rangle $,
two vertices $h_1$, $h_2$ are connected by an edge $h_1\to h_2$
if and only if $h_2$ can be obtained from $h_1$ by reduction (elimination of a principal
part) modulo $S$, i.e., $h_1$ contains a summand $\alpha u$
($\alpha \in \Bbbk \setminus \{0\}$,
$u\in \mathcal X^*$) such that $u = w_1\bar fw_2$ for some words $w_1,w_2\in \mathcal X^*$
and for some $f\in S$,
and $h_2 = h_1 -\alpha w_1fw_2$.

Suppose $\Gamma = \Gamma (\mathcal X,S)$ has the following properties (GSB conditions).
\begin{itemize}
 \item There are no infinite oriented paths in  $\Gamma $. This {\em termination condition}
 says that every polynomial $h\in \Bbbk \langle \mathcal X\rangle $
 can be reduced in a finite number of steps to a terminal form which cannot be further reduced.

 \item If $h\to h_1$ and $h\to h_2$ are two edges in $\Gamma $ then there exist paths
 $h_1\to \dots \to g$ and $h_2\to \dots \to g$ for some vertex~$g$.
This {\em Diamond condition} guarantees uniqueness of the terminal form for each vertex
in~$\Gamma $.
\end{itemize}
Then $S$ is said to be a {\em Gr\"obner--Shirshov basis} in $\Bbbk \langle \mathcal X\rangle $
and those words that are of terminal form (called {\em reduced} words)
form a linear basis
of the associative algebra $\Bbbk \langle \mathcal X\mid S\rangle
= \Bbbk \langle \mathcal X\rangle /(S)$.

It is enough to check the Diamond condition for the following pairs of edges (convergence of ``forks'')
in $\Gamma (\mathcal X,S)$:
\begin{enumerate}
 \item[(C1)] $h = \bar f_1 = w_1\bar f_2 w_2$, $f_1,f_2\in S$, $w_1,w_2\in \mathcal X^*$; then
 $h_1 = r(f_1)$, $h_2 = w_1 r(f_2) w_2$;
 \item[(C2)] $h = \bar f_1u = v\bar f_2$, $f_1,f_2\in S$, $u,v\in \mathcal X^*$, and
 $\deg \bar f_1+\deg \bar f_2 >\deg h$ (so that the principal subwords of $f_1$ and $f_2$ intersect in $h$);
 then $h_1 = r(f_1)u$, $h_2=vr(f_2)$.
\end{enumerate}
In both cases, if there are paths $h\to h_1\to \dots \to g_1$ and $h\to h_2\to \dots \to g_2$, where
$g_1\neq g_2$ are terminal vertices, then we have to add $g_1-g_2$ to the set $S$,
choose its principal part and check the GSB conditions again
for the extended set of relations.

\begin{remark}
 It is convenient if a well order $\leq $ is fixed on $\mathcal X^*$ which is compatible with
 multiplication: then the termination condition is always satisfied
 if $\bar f$ is the principal word if $f$ relative to~$\leq $.
However, it is not necessary to have such an order to find a GSB.
\end{remark}

\begin{example}\label{exmp:Weyl-AlgebraCoeff}
Let $S$ be the set of defining relations
of the algebra $\mathcal A(X,N)$ from  Example \ref{exmp:WeylVertex}
with $\mathcal X = \{T, x(n), y(m) \mid n,m\in \mathbb Z \}$.
Let us write each $f\in S$ in the form $\bar f \to r(f)$ choosing the principal parts as follows:
\begin{gather}
 Tx(n) \to x(n)T - n x(n-1), \quad Ty(n) \to y(n)T - ny(n-1);
   \label{eq:TrWeyl}\\
 x(n)x(m) \to x(m)x(n), \quad y(n)y(m)\to y(m)y(n),\quad n>m;
   \label{eq:CommWeyl}\\
 x(n)y(m) \to y(m)x(n) + [x(n-1),y(m+1)], \quad n>m+1;
   \label{eq:LocXYWeyl}\\
 y(m)x(n) \to x(n)y(m) + [y(m-1),x(n+1)], \quad m\ge n+1.
  \label{eq:LocYXWeyl}
\end{gather}  
This is straightforward to check that the conditions (C1) and (C2) hold for these
relations. Namely, there are no forks of type (C1), and all forks of type (C2) converge.
The key computation of convergence for locality relations 
\eqref{eq:CommWeyl}--\eqref{eq:LocYXWeyl}
in the general form \eqref{eq:FreeLoc}
was done in \cite{Roit1999}, adding \eqref{eq:TrWeyl} obviously does not affect the convergence
since the relations \eqref{eq:FreeLoc} are invariant under the derivation $[T,\cdot ]: z(n)\mapsto -nz(n-1)$,
$z\in X$, $n\in \mathbb Z$.
\end{example}

If $A$ is an  associative algebra generated by a set $\mathcal X$ relative to defining relations~$S$
then a left module~$M$ over~$A$ may be defined also by generators~$Y$ and relations~$R$
(the latter are elements from the free module $\Bbbk \langle \mathcal X\rangle \otimes \Bbbk Y$;
assume a principal part is chosen in each polynomial from~$R$ as well as it was done for~$S$).

Then the split null extension $A\oplus M$ with multiplication
\[
 (a+u)(b+v) = ab + av, \quad a,b\in A,\ u,v\in M,
\]
may be presented as an associative algebra generated by $\mathcal X\cup Y$
relative to the relations $\Sigma = S\cup R \cup \{yz \mid y\in Y,\ z\in \mathcal X\cup Y \}$.
In order to construct a rewriting graph
$\Gamma (\mathcal X\cup Y, \Sigma ) $ for the module $M$ it is enough to consider only those
polynomials (as vertices) from $\Bbbk \langle \mathcal X\cup Y\rangle $ that contain monomials
of the form $uy$, $u\in \mathcal X^*$, $y\in Y$, and check the conditions (C1), (C2).
This leads us to the technique described in \cite{KangLee2000}: how to find
a GSB for a module over an associative algebra.

Namely, assuming $S$ is already a GSB in the associative algebra
$\Bbbk \langle \mathcal X\rangle $, it is enough to check the convergence
of the following ``forks'' $h\to h_1$, $h\to h_2$:
\begin{itemize}
 \item [(CM1)]
 $h = \bar f_1 = w_1\bar f_2w_2y$, $f_1\in R$, $f_2\in S$, $w_1,w_2\in \mathcal X^*$, $y\in Y$;
 then $h_1 = r(f_1)$, $h_2 = w_1r(f_2)w_2y$;
 \item [(CM2)]
 $h = w_1\bar f_1 = \bar f_2w_2y$, $f_1\in R$, $f_2\in S$, $w_1,w_2\in \mathcal X^*$, $y\in Y$,
 $\deg \bar f_1 + \deg\bar f_2 >\deg h$; then $h_1=w_1r(f_1)$, $h_2=r(f_2)w_2y$;
 \item [(CM3)]
 $h = \bar f_1 = w_1\bar f_2$, $f_1,f_2\in R$, $w_1\in \mathcal X^*$; then
 $h_1 = r(f_1)$, $h_2 = w_1 r(f_2)$.
\end{itemize}

\begin{definition}
 Let $V$ be a vertex algebra generated by a set $X$ with locality function
 $N: X\times X \to \mathbb Z_+$, and let
 $A=\mathcal A(X,N)$ be the associative algebra defined above.
A~set of defining relations $R$ of $V$ is said to be a
Gr\"obner--Shirshov basis for the vertex algebra $V$ if $R$
along with \eqref{eq:Vacuum} gives rise to
a Gr\"obner--Shirshov basis of the 1-generated $A$-module (i.e., $Y=\{\mathbf 1\}$).
\end{definition}

\begin{example}\label{exmp:FreeDerCom}
 Let $V$ be the vertex algebra generated by a single element $v$
such that $N(v,v)=0$, with no more relations.
\end{example}

The GSB of $A=\mathcal A(X,N)$ 
consists of $v(n)v(m)\to v(m)v(n)$, $n>m$,
and $Tv(n)\to v(n)T - nv(n-1)$.
Then the locality and vacuum relations form a Gr\"obner--Shirshov basis of $V$
as of an $A$-module:
\[
 v(n)\mathbf 1 \to 0,\ n\ge 0,\quad T\mathbf 1\to 0.
\]
The terminal words that form a linear basis of $V$ are
\begin{equation}\label{eq:Normal-1-generated}
 v(-n_1)\dots v(-n_k)\mathbf 1,\quad n_1\ge \dots \ge n_k>0,\ k\ge 0.
\end{equation}
One may easily recognize here the free commutative differential algebra in one variable $v = v(-1)\mathbf 1$, $T$ is the derivation.

\begin{example}\label{exmp:Abelian}
 Let $V$ be the vertex algebra generated by one element $e$
 with one defining relation $Te=0$.
\end{example}

Then $(e\oo\lambda e)=0$ by sesquilinearity, so the corresponding algebra
$A=\mathcal A(X,N)=\mathcal A(\{e\}, 0)$
is similar to the algebra from Example~\ref{exmp:FreeDerCom}.
The $A$-module $V$ is generated by $\mathbf 1$ relative to the defining relations
\[
 e(n)\mathbf 1 = Te(-1)\mathbf 1 = T\mathbf 1= 0,\quad n\ge 0.
\]
The GSB of $A$ is given by the relations (written as rewriting rules)
\[
 e(n)e(m)\to e(m)e(n),\ n>m,\quad Te(n)\to e(n)T - n e(n-1),\ n\in \mathbb Z,
\]
and in order to get a GSB of $V$ it is enough to add the relations
\[
 e(-n)\underbrace{e(-1)\dots e(-1)}_{l}\mathbf 1\to 0, \ n>1,\ l\ge 0.
\]
For $l=0$, they appear from forks like 
$Te(-1)\mathbf 1 \to 0$ 
$Te(-1)\mathbf 1 \to e(-2)\mathbf 1 + e(-1)T\mathbf 1$
of type (CM3), for $l>0$, forks of type (CM2) provide the desired relations.

The terminal words are of the form $e(-1)^m\mathbf 1$, $m\ge 0$, so $V$
is the polynomial algebra in one variable with trivial derivation $T$
and trivial $\lambda $-bracket.

Let us consider in more details how to find a GSB
for a more complicated example, namely, for the vertex algebra from  Example \ref{exmp:WeylVertex}.

\begin{example}
Let $A$ be the algebra $\mathcal A(X,N)$
with $X=\{x,y\}$, $N(x,x)=N(y,y)=0$, $N(x,y)=N(y,x)=1$,
and let $Y = \{\mathbf 1\}$.
Then the vertex algebra $W=\mathrm{Vert}(X,N\mid (x\oo0y)=\mathbf 1)$
is isomorphic to the 1-generated $A$-module relative to the relations
\begin{gather}
x(n)\mathbf 1 \to 0, \quad y(n)\mathbf 1 \to 0, \quad n\ge 0, \label{eq:WeylLoc}\\
T\mathbf 1 \to 0, \label{eq:WeylVacuum}\\
x(0)y(-1)\mathbf 1 \to  \mathbf 1. \label{eq:WeylComm0}
\end{gather}
\end{example}

Let us determine the structure of the $A$-module $W$ defined as above.
Note that 
for every $a\in W$ we have
$x(n)a=y(n)a=0$ in $W$ for all sufficiently large~$n$.
This property represents locality in a vertex algebra, it also can be derived from 
\eqref{eq:LocXYWeyl}, \eqref{eq:LocYXWeyl}, and \eqref{eq:WeylLoc}.

There is a plenty of non-converging ``forks'' of all types (CM1)--(CM3)
for the edges defined by these rules
along with the rules from Example \ref{exmp:Weyl-AlgebraCoeff}.

First, let us consider the vertex $Tx(0)y(-1)\mathbf 1$.
On the one hand,
$Tx(0)y(-1)\mathbf 1\to x(0)Ty(-1)\mathbf 1 = -x(0)y(-2)\mathbf 1 + x(0)y(-1)T\mathbf 1
\to -x(0)y(-2)\mathbf 1$
by \eqref{eq:TrWeyl} and \eqref{eq:WeylVacuum}.
On the other hand,
$Tx(0)y(-1)\mathbf 1 \to T\mathbf 1 \to 0$
by \eqref{eq:WeylComm0}. Hence, we have to add the rule
$x(0)y(-2)\mathbf 1 \to 0$ to the set of defining relations.
Similarly,
\begin{equation}\label{eq:WeylComm0-n}
x(0)y(-n)\mathbf 1 \to 0 , \quad n\ge 2.
\end{equation}

Relations \eqref{eq:WeylComm0-n} modulo \eqref{eq:LocXYWeyl} and \eqref{eq:WeylLoc} imply
\begin{equation}\label{eq:CommJacWeyl-triv}
 [x(n),y(m)]\mathbf 1 \to \delta_{n+m,-1}\mathbf 1, \quad n,m\in \mathbb Z.
\end{equation}
More generally, we have the following

\begin{proposition}
If $A$ is the algebra $\mathcal A(X,N)$ from Example~\ref{exmp:WeylVertex} and
$W$ is the $A$-module generated by a single element $\mathbf 1$ modulo
the defining relations \eqref{eq:WeylLoc}--\eqref{eq:WeylComm0}
then for every word $u$ (including the empty word)
in the alphabet $\{x(n),y(n)\mid n\in \mathbb Z\}$
the following relations hold in $W$:
\begin{gather}
y(m)x(m)u\mathbf 1 \to x(m)y(m)u\mathbf 1, \quad m\in \mathbb Z;
 \label{eq:Rel1-yx}\\
x(m+1)y(m)u\mathbf 1 \to y(m)x(m+1)u\mathbf 1, \quad m\in \mathbb Z,\ m\ne -1;
 \label{eq:Rel1-xy} \\
x(0)y(-1)u\mathbf 1 \to y(-1)x(0)u\mathbf 1 + u\mathbf 1.
 \label{eq:Rel1-xy0}
\end{gather}
\end{proposition}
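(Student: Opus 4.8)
The plan is to recognize that the three relations \eqref{eq:Rel1-yx}--\eqref{eq:Rel1-xy0} are three faces of a single statement. By the defining relation $[x(n),y(m)]=[x(n-1),y(m+1)]$ of $\mathcal A(X,N)$ the commutator $[x(n),y(m)]$ depends only on $s=n+m$; denote it by $c_s$. Then \eqref{eq:Rel1-yx} reads $c_{2m}u\mathbf 1=0$, \eqref{eq:Rel1-xy} reads $c_{2m+1}u\mathbf 1=0$ for $m\ne-1$, and \eqref{eq:Rel1-xy0} reads $c_{-1}u\mathbf 1=u\mathbf 1$, so that together, as $s$ ranges over even integers, odd integers different from $-1$, and $-1$, they assert
\[
 c_s\,u\mathbf 1=\delta_{s,-1}\,u\mathbf 1 \qquad\text{for every } s\in\mathbb Z \text{ and every word } u.
\]
I would prove this uniform statement by induction on the length of $u$, the base case of empty $u$ being exactly the relation \eqref{eq:CommJacWeyl-triv} established above.

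For the inductive step I would write $u=z(p)u'$ with $z\in\{x,y\}$ and split $c_s z(p)=z(p)c_s+[c_s,z(p)]$. Acting on $u'\mathbf 1$, the first summand gives $\delta_{s,-1}u\mathbf 1$ by the inductive hypothesis applied to the shorter word $u'$, so the whole step reduces to proving the identity $[c_s,z(p)]\,u'\mathbf 1=0$. To handle this double commutator I would invoke the Jacobi identity together with $[x(n),x(p)]=0$ and $[y(m),y(p)]=0$ to obtain, for \emph{any} representation $c_s=[x(n),y(m)]$ with $n+m=s$,
\[
 [c_s,x(p)]=[c_{s-n+p},x(n)], \qquad [c_s,y(p)]=[c_{n+p},y(m)];
\]
that is, the double commutator is again a commutator of $c$ with a single generator, but now with the index $n$ (and hence $m=s-n$) still at my disposal.

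The main obstacle is that the naive induction is circular: commuting $c_s$ past $z(p)$ reproduces a $c$ acting on a word of the same length as $u$, and since all relations in play are homogeneous they can never by themselves fix the value of the scalar. The reformulation is designed so that this circularity is broken by \emph{locality}. Expanding, say, $[c_{s-n+p},x(n)]\,u'\mathbf 1=c_{s-n+p}\,x(n)u'\mathbf 1-x(n)\,c_{s-n+p}u'\mathbf 1$, I would choose $n$ so large that $x(n)u'\mathbf 1=0$ — this is the locality recorded just before the Proposition, applied to the fixed element $u'\mathbf 1$. The first term then vanishes outright, while in the second term the inductive hypothesis for $u'$ gives $c_{s-n+p}u'\mathbf 1=\delta_{s-n+p,-1}u'\mathbf 1=0$, the index $s-n+p$ lying far below $-1$ once $n$ is large. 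Hence $[c_s,x(p)]\,u'\mathbf 1=0$. The case $z=y$ is symmetric: there I would instead take $n$ very negative, so that $m=s-n$ is large and $y(m)u'\mathbf 1=0$ by locality, while $n+p\ne-1$ kills the remaining term. This yields $[c_s,z(p)]\,u'\mathbf 1=0$ in both cases, completing the induction and with it all three relations. The one point to watch is that the inductive hypothesis is used for $u'$ at two different indices — the genuine $s$ (possibly equal to $-1$) for the straight-through term, and a shifted, deliberately non-special index for the locality term — so the induction must be carried out for all $s$ simultaneously.
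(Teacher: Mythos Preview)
Your proof is correct and follows essentially the same approach as the paper: both prove the unified relation $[x(n),y(m)]u\mathbf 1=\delta_{n+m,-1}u\mathbf 1$ by induction on the length of $u$, using the freedom to shift $(n,m)$ with $n+m$ fixed together with locality to annihilate the obstructing terms. Your packaging via $c_s$ and the Jacobi identity is a bit cleaner than the paper's step-by-step operator shuffle, but the underlying argument is the same.
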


\begin{proof}
Let us prove by induction on the length of $u$ that 
\begin{equation}\label{eq:CommRelWeyl}
x(n)y(m)u\mathbf 1 = y(m)x(n)u\mathbf 1 + \delta_{n+m,-1}u\mathbf 1.
\end{equation}
For the empty word $u$ it follows from \eqref{eq:CommJacWeyl-triv}.
Suppose $u=x(k)v$, where $v$ is a shorter word
(the case $u=y(k)v$ is analogous). 
Then the inductive assumption along with the defining relations of $A$
(see Example~\ref{exmp:WeylVertex}) imply
\begin{multline*}
x(n)y(m)u\mathbf 1 
= x(n)y(m)x(k)v \mathbf 1 
= x(n)x(k)y(m)v\mathbf 1 -\delta_{m+k,-1}x(n)v\mathbf 1 \\
=x(k)x(n)y(m)v\mathbf 1 -\delta_{m+k,-1}x(n)v\mathbf 1 \\
= x(k)y(m)x(n)v\mathbf 1 +\delta_{n+m,-1}x(k)v\mathbf 1 
-\delta_{m+k,-1}x(n)v\mathbf 1 \\
= y(m)x(k)x(n)v\mathbf 1 +[x(k),y(m)]x(n)v\mathbf 1+\delta_{n+m,-1}x(k)v\mathbf 1 
-\delta_{m+k,-1}x(n)v\mathbf 1.
\end{multline*}
Note that 
\[
[x(k),y(m)]x(n)v\mathbf 1 = [x(k+N),y(m-N)]x(n)v\mathbf 1 = x(k+N)y(m-N)x(n)v\mathbf 1
\]
for sufficiently large $N$. Apply the inductive assumption again to get 
\begin{multline*}
x(k+N)y(m-N)x(n)v\mathbf 1 = x(k+N)x(n)y(m-N)v\mathbf 1 \\
= x(n)x(k+N)y(m-N)v\mathbf 1
= x(n)[x(k+N),y(m-N)]v\mathbf 1 = \delta_{k+m,-1} x(n)v\mathbf 1.
\end{multline*}
Hence,
\[
 x(n)y(m)x(k)v \mathbf 1 = y(m)x(k)x(n)v\mathbf 1 +\delta_{n+m,-1}x(k)v\mathbf 1
\]
and it remains to switch $x(n)$ and $x(k)$ to get the desired relation.
\end{proof}

Let $\Sigma $ stand for the system of relations
\eqref{eq:TrWeyl}--\eqref{eq:LocYXWeyl}
 along with
 \eqref{eq:WeylLoc}--\eqref{eq:WeylComm0}
 and \eqref{eq:Rel1-yx}--\eqref{eq:Rel1-xy0}.
This system meets the termination condition, and
the terminal words in $\Gamma(X\cup\{\mathbf 1\}, \Sigma)$
(those that correspond to elements of $W$)
are of the form
\begin{equation}\label{eq:Weyl-basis}
 u\mathbf 1, \quad u = z_1(n_1)z_2(n_2)\ldots z_k(n_k), \quad z_i\in \{x,y\},
\end{equation}
where $n_1\le n_2\le \dots \le n_k<0$ ($k\ge 0$); if $z_i=y$ and $z_{i+1}=x$ then
$n_i<n_{i+1}$.

\begin{theorem}\label{thm:WeylGSB}
The relations
 \eqref{eq:WeylLoc}--\eqref{eq:WeylComm0},
and \eqref{eq:Rel1-yx}--\eqref{eq:Rel1-xy0} with terminal words $u$ as in \eqref{eq:Weyl-basis}
form a GSB of the vertex algebra $W=\mathrm{Vert}(X,N\mid (x\oo0 y)=\mathbf 1)$.
\end{theorem}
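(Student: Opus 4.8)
The plan is to invoke the module composition lemma precisely as prepared above. Since the associative relations \eqref{eq:TrWeyl}--\eqref{eq:LocYXWeyl} already form a Gr\"obner--Shirshov basis of $A=\mathcal A(X,N)$ by Example~\ref{exmp:Weyl-AlgebraCoeff}, and the enlarged system $\Sigma$ satisfies the termination condition, by the criterion of \cite{KangLee2000} it suffices to verify that every fork of type (CM1), (CM2), (CM3) converges. First I would list the leading words of the module relations: the vacuum words $x(n)\mathbf 1$, $y(n)\mathbf 1$ ($n\ge 0$) and $T\mathbf 1$ from \eqref{eq:WeylLoc}--\eqref{eq:WeylVacuum}, together with the three sorting families $y(m)x(m)u\mathbf 1$, $x(m+1)y(m)u\mathbf 1$ ($m\ne-1$) and $x(0)y(-1)u\mathbf 1$ from \eqref{eq:Rel1-yx}--\eqref{eq:Rel1-xy0}, in each of which the tail $u$ is a reduced word of the form \eqref{eq:Weyl-basis}. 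As \eqref{eq:WeylComm0} is the empty-tail instance of \eqref{eq:Rel1-xy0} modulo \eqref{eq:WeylLoc}, all the genuine content lives in these three families, whose validity in $W$ is exactly the statement of the preceding proposition.

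The forks that are cheap to settle are those touching a vacuum relation. Their (CM1)/(CM2) overlaps are with the transport rules \eqref{eq:TrWeyl} and with the commutation and locality rules \eqref{eq:CommWeyl}--\eqref{eq:LocYXWeyl} on the rightmost two letters; both branches reduce to $0$, up to the scalar already recorded in \eqref{eq:CommJacWeyl-triv}, by the same two-branch computation that produced \eqref{eq:WeylComm0-n}. The engine for the remaining forks is the master identity \eqref{eq:CommRelWeyl}: it transports any $x(n)$ past any $y(m)$ through an arbitrary word at the single cost $\delta_{n+m,-1}u\mathbf 1$, which is precisely the correction carried by \eqref{eq:Rel1-xy0}. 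Because none of the heads $y(m)x(m)$, $x(m+1)y(m)$, $x(0)y(-1)$ is itself an associative leading word, each (CM1) overlap between a sorting rule and an $S$-rule can only occur at the junction of the head with the first letter of $u$, and each (CM3) overlap is of a sorting leading word sitting as a suffix $z'(b)u\mathbf 1$ of another sorting word (vacuum words cannot occur inside $u\mathbf 1$, since reduced tails carry only negative indices). In both cases \eqref{eq:CommRelWeyl}, followed by re-sorting $u$ through the reduced-word rules, drives the two branches to a common terminal form.

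The step I expect to be the real obstacle is the bookkeeping in the (CM2) forks in which an associative swap \eqref{eq:CommWeyl}, \eqref{eq:LocXYWeyl} or \eqref{eq:LocYXWeyl} acts immediately to the left of the head of a sorting relation, say on a word $c(p)z(a)z'(b)u\mathbf 1$ with $c(p)z(a)$ an $S$-leading pair. Here the swap changes the index pattern, so afterwards one must re-sort a word whose newly adjacent indices may fall into a different boundary regime (equal indices, indices differing by one, or the exceptional pair $x(0)y(-1)$), and the two reduction paths agree only if the scalar $\delta$-corrections they accumulate coincide. Checking this match across all index regimes is the delicate computation, and it is exactly the restriction of \eqref{eq:Rel1-yx}--\eqref{eq:Rel1-xy0} to reduced tails $u$ that keeps the number of essentially distinct overlaps finite. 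Once the three fork types are shown to converge, the diamond lemma yields that the reduced words \eqref{eq:Weyl-basis} form a linear basis of $W$; as an independent confirmation, and to realize the promised identification of $W$ with the Weyl vertex algebra, I would also exhibit the $\beta\gamma$-system model in which \eqref{eq:Weyl-basis} maps onto manifestly independent monomials.
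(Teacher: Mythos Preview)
Your proposal is correct and follows essentially the same route as the paper: invoke the module composition criterion of \cite{KangLee2000}, enumerate the (CM1)--(CM3) forks arising from the heads of \eqref{eq:WeylLoc}--\eqref{eq:WeylComm0} and \eqref{eq:Rel1-yx}--\eqref{eq:Rel1-xy0} against the associative rules \eqref{eq:TrWeyl}--\eqref{eq:LocYXWeyl}, and reduce each branch using the master commutation identity \eqref{eq:CommRelWeyl}. The paper carries out the same plan, differing only in presentation: it writes out the complete list of fork patterns explicitly and then checks two representative cases in full (the (CM2) fork $y(n)x(n)y(m)u\mathbf 1$ with $n>m+1$ and the $T$-fork $Tx(m+1)y(m)u\mathbf 1$), whereas you give a structural classification of the overlaps and defer the index-regime bookkeeping; neither version performs all the verifications, and your identification of the left-side associative swaps against sorting heads as the delicate case matches what the paper's worked examples illustrate.
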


\begin{proof}
The Diamond Condition for
\eqref{eq:TrWeyl}--\eqref{eq:LocYXWeyl}, \eqref{eq:WeylLoc}--\eqref{eq:WeylComm0},
and \eqref{eq:Rel1-yx}--\eqref{eq:Rel1-xy0}
may be checked in a straightforward way.
 Let us list all ambiguities (``forks'') of types (CM1)--(CM3) for which the
 convergence  should be checked:
\[
\begin{gathered}
 y(n)x(m)\mathbf 1, \ n>m\ge 0,
 \quad
 x(n)y(m)\mathbf 1,\ n>m+1 \ge 1,
 \quad
 Tx(n)\mathbf 1,\ Ty(n)\mathbf 1,\ n\ge 0, \\
 x(n)x(m)\mathbf 1, \ y(n)y(m)\mathbf 1, \ n>m\ge 0,
 \quad
 x(m+1)y(m)\mathbf 1, \ y(m)x(m)\mathbf 1,\ m\ge 0, \\
y(n)x(m+1)y(m)u\mathbf 1, \ n\ge m+2,
\quad
x(n)y(m)x(m)u\mathbf 1, \  n>m+1, \\
x(n+1)y(n)x(m)u\mathbf 1,\ x(n+1)y(n)y(m)u\mathbf 1,\ n\ge  m+1, \\
\quad
y(n)x(n)y(m)u\mathbf 1, \ n>m+1,
\quad
y(n)x(n)x(m)u\mathbf 1,\ n>m, \\
x(n)x(m+1)y(m)\mathbf 1,\ n>m+1,
\quad
y(n)y(m)x(m)u\mathbf 1,\ n>m, \\
T x(m+1)y(m)u\mathbf 1,\quad Ty(m)x(m)u\mathbf 1.
\end{gathered}
\]
For example, choose $h = y(n)x(n)y(m)u\mathbf 1$, $n>m+1$.
On the one hand,
$h\to h_1 = x(n)y(n)y(m)u\mathbf 1$
by \eqref{eq:Rel1-yx}.
On the other hand,
$h \to h_2 = y(n)y(m)x(n)u\mathbf 1 + y(n)[x(n-1),y(m+1)]u\mathbf 1$
by \eqref{eq:LocXYWeyl}.
Then
\[
 h_1\to x(n)y(m)y(n)u\mathbf 1\to
g = y(m)x(n)y(n)u\mathbf 1 + [x(n-1),y(m+1)]y(n)u\mathbf 1,
\]
\begin{multline*}
 h_2\to y(m)y(n)x(n)u\mathbf 1 + y(n)x(n-1)y(m+1)u\mathbf 1- y(n)y(m+1)x(n-1)u\mathbf 1 \\
\to y(m)x(n)y(n)u\mathbf 1 + x(n-1)y(n)y(m+1)u\mathbf 1
+ [y(n-1),x(n)]y(m+1)u\mathbf 1 \\
- y(m+1)y(n)x(n-1)u\mathbf 1
\to
y(m)x(n)y(n)u\mathbf 1 + x(n-1)y(m+1)y(n)u\mathbf 1 \\
+ [y(n-1),x(n)]y(m+1)u\mathbf 1
- y(m+1)x(n-1)y(n)u\mathbf 1 - y(m+1)[y(n-1),x(n)]u\mathbf 1\\
=
y(m)x(n)y(n)u\mathbf 1 + [x(n-1),y(m+1)]y(n)u\mathbf 1 \\
+ [y(n-1),x(n)]y(m+1)u\mathbf 1 - y(m+1)[y(n-1),x(n)]u\mathbf 1\\
\end{multline*}
The last two summands give zero by \eqref{eq:Rel1-xy} or \eqref{eq:Rel1-xy0}, so the
type (CM2) fork $h\to h_1$, $h\to h_2$ converges to $g$.

Let us also consider an example of a type (CM1) fork with $h = Tx(m+1)y(m)u\mathbf 1$.
On the one hand,
$h\to h_1 = x(m+1)Ty(m)u\mathbf 1 - (m+1)x(m)y(m)u\mathbf 1$.
On the other hand,
$h\to h_2 = Ty(m)x(m+1)u\mathbf 1 + \delta_{m,-1}Tu\mathbf 1$.
Note that $Tu\mathbf 1\to \dots \to u'\mathbf 1$, where $u'$ is a linear combination
of words obtained from the derivation $[T,u]$ in $A$.
Then
\begin{multline*}
 h_1
 \to x(m+1)y(m)u'\mathbf 1 -m x(m+1)y(m-1)u\mathbf 1 - (m+1)x(m)y(m)u\mathbf 1 \\
 \to y(m)x(m+1)u'\mathbf 1 +\delta_{m,-1}u'\mathbf 1 -m( y(m-1)x(m+1)u\mathbf 1 + [x(m),y(m)]u\mathbf 1) \\
 - (m+1)x(m)y(m)u\mathbf 1
 \to g  =
y(m)x(m+1)u'\mathbf 1 +\delta_{m,-1}u'\mathbf 1 \\ -my(m-1)x(m+1)u\mathbf 1
- (m+1)x(m)y(m)u\mathbf 1,
\end{multline*}
\begin{multline*}
 h_2\to
 y(m)x(m+1)u'\mathbf 1 -m y(m-1)x(m+1)u\mathbf 1 - (m+1)y(m)x(m)u\mathbf 1 \\
 + \delta_{m,-1}Tu\mathbf 1 \to g
\end{multline*}
as above. Hence, this fork also converges.
The convergence of all other forks can be checked similarly.
\end{proof}

\begin{corollary}\label{cor:Weyl-basis}
 A linear basis of the vertex algebra from Example~\ref{exmp:WeylVertex}
 is given by the terminal words \eqref{eq:Weyl-basis}.
\end{corollary}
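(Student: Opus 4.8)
The plan is to obtain the corollary as a direct consequence of Theorem~\ref{thm:WeylGSB} via the general principle, recorded in Section~2, that the reduced (terminal) words relative to a Gr\"obner--Shirshov basis constitute a linear basis of the object it presents. Theorem~\ref{thm:WeylGSB} states exactly that the relations \eqref{eq:WeylLoc}--\eqref{eq:WeylComm0} and \eqref{eq:Rel1-yx}--\eqref{eq:Rel1-xy0} form a GSB of the vertex algebra $W$, viewed as the 1-generated $A$-module over $A=\mathcal A(X,N)$ with $Y=\{\mathbf 1\}$. So the first and essentially only step is to make this translation explicit in the module setting.

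Concretely, I would recall that a GSB for the module means the rewriting system $\Sigma$ is both terminating and confluent: every element reduces in finitely many steps to a terminal form, and that form is unique. Termination then shows that the terminal words span $W$, while uniqueness of terminal forms forbids any nontrivial linear relation among distinct terminal words, giving their linear independence. The terminal words of $\Sigma$ were already identified, just before Theorem~\ref{thm:WeylGSB}, as those of the form \eqref{eq:Weyl-basis}, i.e.\ $u\mathbf 1$ with $u=z_1(n_1)\dots z_k(n_k)$, $z_i\in\{x,y\}$, $n_1\le\dots\le n_k<0$, subject to $n_i<n_{i+1}$ whenever $z_i=y$ and $z_{i+1}=x$. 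Combining this with the GSB property from Theorem~\ref{thm:WeylGSB} yields that \eqref{eq:Weyl-basis} is a linear basis of $W$, as claimed.

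The substantive work --- verifying the Diamond condition for $\Sigma$ --- has already been carried out in the proof of Theorem~\ref{thm:WeylGSB}, so no genuine obstacle remains for the corollary. The single point still requiring attention is that the description \eqref{eq:Weyl-basis} is both complete and irredundant: one checks that no rule of $\Sigma$ applies to a word of this shape, so each such word is terminal, and conversely that every reducible word is driven toward this shape by the commutation, locality, and vacuum rules \eqref{eq:TrWeyl}--\eqref{eq:LocYXWeyl}, \eqref{eq:WeylLoc}--\eqref{eq:WeylComm0} and \eqref{eq:Rel1-yx}--\eqref{eq:Rel1-xy0}. This is a routine combinatorial inspection: the ordering $n_1\le\dots\le n_k$ is forced by the commutation and locality rules, and the strict inequality in the $yx$-case reflects the correction term $u\mathbf 1$ produced by \eqref{eq:Rel1-xy0}.
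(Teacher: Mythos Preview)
Your proposal is correct and matches the paper's approach: the corollary is stated without proof in the paper, being an immediate consequence of Theorem~\ref{thm:WeylGSB} together with the general principle (recorded in Section~2) that terminal words relative to a GSB form a linear basis. One small inaccuracy in your final sentence: the strict inequality in the $yx$-case is forced by rule \eqref{eq:Rel1-yx} (which eliminates $y(m)x(m)$ with equal indices), not by the correction term in \eqref{eq:Rel1-xy0}; this does not affect the argument.
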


\begin{remark}
 Modulo the relations \eqref{eq:CommRelWeyl}, the basis \eqref{eq:Weyl-basis}
 may be replaced with the set of words
\[
 x(-n_1)\dots x(-n_r)y(-m_1)\dots y(-m_k)\mathbf 1,
\]
where $n_1\ge \dots \ge n_r>0$, $m_1\ge \dots \ge m_k>0$.
The latter is known to be the basis of the Weyl vertex algebra \cite{AdamPedic19} which meets the relation
$(x\oo\lambda y) =\mathbf 1$,
so Example~\ref{exmp:WeylVertex} actually describes this vertex algebra.
\end{remark}

\section{Universal vertex envelopes of Lie conformal algebras}

As follows from the definition, there is a forgetful functor
from the category of vertex algebras to the category of Lie conformal algebras
(somewhat similar to the forgetful functor from the category of Poisson algebras
to that of Lie algebras). There exists its left adjoint functor
$U_{\mathrm {Vert}}$ that turns a Lie conformal algebra to its
universal vertex envelope. The structure of such a vertex algebra was described in
\cite{Roit1999}. It is also easy to state in terms of generators and relations.

Suppose $L$ is a Lie conformal algebra generated by a set $X$.
Assume the function $N$ is the restriction of the locality function $N_L$ onto $X\times X$.
In general, the structure of $L$ is determined
by defining relations $R$ between generators stated in terms of the operations
$T$ and $(\cdot\oo n \cdot )$, $n\ge 0$.
By the axioms of a Lie conformal algebra, each relation may be presented
as a $\Bbbk [T]$-linear combination of right-normed words
\[
 (a_1\oo {n_1} (a_2\oo{n_2} \dots (a_k\oo{n_k} a_{k+1})\dots )), \quad a_i\in X,\ n_i\ge 0.
\]
Then $U_{\mathrm {Vert}}(L) = \mathrm{Vert}(X,N\mid R)$,
where the elements of $R$ are interpreted as the same $\Bbbk[T]$-linear combinations
of
\[
 a_1(n_1)\dots a_{k}(n_k)a_{k+1}(-1)\mathbf 1.
\]

Many classical examples of vertex algebras are quotients of universal vertex envelopes
of Lie conformal algebras.
Let $E$ be a 1-dimensional central extension of a Lie conformal algebra $L$
with a $\Bbbk [T]$-basis $X$, that is, 
we are given an exact sequence of conformal algebras
\begin{equation}\label{eq:CentralExt}
 0\to \Bbbk e \to E \to L\to 0, \quad Te=0.
\end{equation}

The structure of $E$ is completely determined
by its multiplication table with polynomial coefficients
\[
 (x\oo\lambda y ) = \sum\limits_{z\in X} h_{x,y}^z(T,\lambda )z + \varphi _{x,y}(\lambda )e,
\]
for $x,y\in X$.
Assume $X$ is totally ordered in some way, and $N: X\times X\to \mathbb Z_+$ is the locality
function in $E$, i.e.,
$N(x,y)  = \max \{\deg \varphi_{x,y}, \deg_\lambda h_{x,y}^z \mid z\in X \} + 1$.
Since $e$ is central, set $N(e,x)=N(x,e)=N(e,e)=0$.
Then
\[
 U_{\mathrm{Vert}}(E) = \mathrm{Vert} (X\cup\{e\}, N \mid R),
\]
where $R$ consists of the relations
\begin{gather*}
x(n)e(-1)\mathbf 1\to 0,\ e(n)x(-1)\mathbf 1\to 0,\ e(n)e(-1)\mathbf 1,\quad n\ge 0, \\
Te(-1)\mathbf 1 \to 0, \\
 x(n)y(-1)\mathbf 1 \to \sum\limits_{z\in X } h_{x,y}^{z,n}(T)z(-1)\mathbf 1 + \varphi _{x,y}^n e(-1)\mathbf 1, \quad x,y\in X,\ n\ge 0.
\end{gather*}
Here
$h_{x,y}^{z,n}(T)$ and $\varphi_{x,y}^n$ are the coefficients at $\lambda ^n/n!$
of the poly\-no\-mials $h_{x,y}^z(T,\lambda )$ and 
$\varphi _{x,y}(\lambda )$, respectively.

Let us order the set $\mathcal X = \{T, x(n), e(m) \mid x\in X, n,m\in \mathbb Z\}$
of generators of $\mathcal A(X\cup\{e\}, N)$ as follows:
$T>e(n)>x(m)$ for all $x\in X$, $n,m\in \mathbb Z$, for $x,y\in X$ let $y(n)>x(m)$ if and only
if $(n,y)>(m,x)$ lexicographically. It is convenient to order $e(n)$ as follows:
\[
 e(-1)<e(0)<e(1)<\dots <e(-2)<e(-3)<\dots .
\]

\begin{theorem}[c.f. PBW-Theorem for vertex algebras \cite{Roit1999}]\label{thm:VertexPBW}
The Gr\"obner--Shirshov basis of $U_{\mathrm{Vert}}(E)$
as of $\mathcal A(X\cup \{e\},N)$-module 
consists of
\[
\begin{aligned}
x(n)e(-1)^l\mathbf 1 & \to 0,\ n\ge 0,\ x\in X,\ l\ge 0, \\
T\mathbf 1& \to 0, \quad  e(n)\mathbf 1\to 0,\ n\ne -1, \\
x(n)y(m)u\mathbf 1 & \to y(m)x(n)u\mathbf 1 \\
 & + \sum\limits_{z\in X}\sum\limits_{k,s\ge 0} 
  (-1)^k\binom{n}{s}\binom{n+m-s}{k} \alpha^{z,n}_{x,y,k} z(n+m-s-k)u\mathbf 1  \\
& + \sum\limits_{s\ge 0} \binom{n}{s} \varphi_{x,y}^n e(n+m-s)u\mathbf 1, \\
  & x(n)>y(m),\ x,y\in X,
\end{aligned}
\]
where $u$ is a word in the alphabet $\mathcal X\setminus \{T\}$,
$\alpha^{z,n}_{x,y,k}$ are coefficients at $T^k/k!$ of $h_{x,y}^{z,n}(T)$.
\end{theorem}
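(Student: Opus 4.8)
The plan is to follow the same strategy as in the proof of Theorem~\ref{thm:WeylGSB}: verify that the listed rewriting rules satisfy the termination condition and the Diamond condition, the latter through the overlaps (CM1)--(CM3) of \cite{KangLee2000}. I would take for granted that the defining relations $S$ of the coefficient algebra $\mathcal A(X\cup\{e\},N)$ already form a Gr\"obner--Shirshov basis: the convergence of the locality forks \eqref{eq:FreeLoc} is established in \cite{Roit1999}, the $T$-relations do not spoil it since \eqref{eq:FreeLoc} is invariant under the derivation $z(n)\mapsto -nz(n-1)$, and the centrality relations for $e$ are trivially compatible. It then remains only to resolve the module-level forks.

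First I would check termination using the monomial order induced by the chosen order on $\mathcal X$, graded first by the number of letters. Each module rule strictly lowers a word: the vacuum rules and the rules $x(n)e(-1)^l\mathbf 1\to 0$ send their principal parts to $0$; in the commutation rule the leading term $y(m)x(n)u\mathbf 1$ is smaller than $x(n)y(m)u\mathbf 1$ because $x(n)>y(m)$, while every correction term $z(n+m-s-k)u\mathbf 1$ and $e(n+m-s)u\mathbf 1$ carries one letter fewer and is therefore smaller by degree. Consequently every module word reduces to a combination of the reduced words, namely the sorted products $z_1(n_1)\dots z_k(n_k)\mathbf 1$ with $z_i\in X\cup\{e\}$, in which $e$ may occur only as $e(-1)$ (since $e(n)\mathbf 1=0$ for $n\ne -1$ and $e$ is central) and each adjacent pair is in the fixed order.

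For the Diamond condition I would organize the forks exactly as in Theorem~\ref{thm:WeylGSB}. The commutation rule matches a suffix $x(n)y(m)u\mathbf 1$, so the ambiguities are: (CM1) the leading pair $x(n)y(m)$ is itself reducible in $\mathcal A$ by locality; (CM2) words $a(j)x(n)y(m)u\mathbf 1$ in which $a(j)x(n)$ is locality-reducible while $x(n)y(m)u\mathbf 1$ is module-reducible, together with the $T$-headed forks $Tx(n)y(m)u\mathbf 1$; and (CM3) words carrying a second out-of-order pair inside $u$, so that both the leading pair and an interior suffix trigger the commutation rule. The engine in every case is that the correction terms encode exactly the bracket expansion
\[
[x(n),y(m)]=\sum_{s\ge 0}\binom{n}{s}(x\oo{s}y)(n+m-s),
\]
with $(x\oo{s}y)$ resolved in the basis $X\cup\{e\}$ through the polynomials $h_{x,y}^{z}$ and $\varphi_{x,y}$ and through the $T$-action $z(p)\mapsto -pz(p-1)$, which produces the second binomial factor $\binom{n+m-s}{k}$ and the coefficients $\alpha^{z,n}_{x,y,k}$. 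Since $E$ is a Lie conformal algebra, the two ways of unshuffling a triple are reconciled precisely by the conformal Jacobi identity \eqref{eq:JacobiConf} and sesquilinearity, so each fork converges to a common value.

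The main obstacle is the bookkeeping in the (CM2) and (CM3) forks, where both reductions emit correction terms and one must check that the accumulated double sums coincide. As in the Weyl computation, the decisive step is to replace a near-diagonal commutator $[x(k),y(m)]$ by $[x(k+N),y(m-N)]$ for large $N$, so that the locality relations of $\mathcal A$ apply and the correction can be pushed through; the resulting binomial identities are exactly those furnished by \eqref{eq:JacobiConf}. The centrality $Te=0$ keeps the $\varphi$-part consistent and makes the forks involving $x(n)e(-1)^l\mathbf 1$ collapse immediately, since $x(n)$ annihilates any product of $e(-1)$'s on $\mathbf 1$ and the $e(p)$ introduced by the $\varphi$-terms commute with everything. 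Alternatively, and more economically, once termination shows that the reduced words span $U_{\mathrm{Vert}}(E)$ one may invoke their linear independence from the PBW basis of \cite{Roit1999}: spanning together with independence forces confluence, so $R$ is a Gr\"obner--Shirshov basis without a term-by-term fork check.
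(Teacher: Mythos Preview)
Your proposal is correct and follows essentially the same line as the paper. The paper's own argument is deliberately brief: it notes that the statement ``can be checked similarly to Theorem~\ref{thm:WeylGSB}'' and also ``derived from the construction of $U_{\mathrm{Vert}}(L)$ in terms of coefficient algebras in \cite{Roit1999}'', identifying the commutation rule with $[x(n),y(m)]=\sum_{s\ge 0}\binom{n}{s}(x\oo{s}y)(n+m-s)$ in $\mathcal L(E)$ and matching the terminal words with the PBW basis of the induced module $U(\mathcal L(E))\otimes_{U_+}\Bbbk\mathbf 1$. Your write-up expands on both of these routes---the direct (CM1)--(CM3) fork check and the spanning-plus-independence shortcut via \cite{Roit1999}---so it is a faithful, more detailed rendering of the paper's approach rather than a different one.
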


This statement can be checked similarly to Theorem~\ref{thm:WeylGSB},
as well as derived from the construction of
$ U_{\mathrm{Vert}}(L)$
in terms of coefficient algebras in \cite{Roit1999}.
Namely, the last rewriting rule represents 
the equality 
\[
[x(n),y(m)] = \sum\limits_{s\ge 0 } \binom{n}{s} (x\oo s y)(n+m-s)
\]
in the coefficient Lie algebra $\mathcal L(E)$ of $E$.
The terminal words that form a linear basis of
$ U_{\mathrm{Vert}}(L)$
are of the form
\[
 x_1(n_1)\dots x_k(n_k)\underbrace{e(-1)\dots e(-1)}_l\mathbf 1,
 \quad x_i\in X, \ x_1(n_1)\le \dots x_k(n_k),\ n_i<0,\ l,k\ge 0.
\]
This is exactly the linear basis of 
$U(\mathcal L(E))\otimes _{U_+} \Bbbk \mathbf 1$, 
where $U_+$ stands for the subalgebra in 
$U(\mathcal L(E))$ generated by $x(n)$, $n\ge 0$, $x\in X$.
The latter induced module is exactly $U_{\mathrm{Vert}}(E)$
considered in \cite{Roit1999}.

\begin{example}
 Let $L$ be the Virasoro conformal algebra from Example \ref{exmp:VirConformal}
 denoted $\mathrm{Vir }$.
It has a non-trivial 1-dimensional central extension $E=\mathrm{Vir}_c$
which is generated by $v$ and $e$, where $Te=0$, $e$ is central, and
\[
 (v\oo\lambda v) = (T+2\lambda )v + \dfrac{1}{12} c\lambda^3 e,
\]
$c\in \Bbbk $.
\end{example}

The quotient of the universal vertex envelope $U_{\mathrm{Vert}}(\mathrm{Vir}_c)$
modulo the ideal generated by $e(-1)\mathbf 1 -\mathbf 1$ is known as
the {\em Virasoro vertex algebra of central charge~$c$} (see, e.g., \cite{F-BZvi}).
The rule $e(-1)\mathbf 1\to \mathbf 1$ does not give birth to new non-converging forks.
Hence,
the linear basis of the Virasoro vertex algebra consists of the words
\eqref{eq:Normal-1-generated},  as in the classical PBW-Theorem.

\begin{example}
 Let $L$ be 1-generated Abelian Lie conformal algebra, i.e.,
 $L=\Bbbk [T]v$ with $(v\oo\lambda v)=0$.
 Then for every odd polynomial $f=f(\lambda )\in \Bbbk [\lambda ]$
 such that $f(-\lambda )=-f(\lambda )$
there is a non-trivial 1-dimensional central extension of $L$,
a Lie conformal algebra
$E = H_f = \Bbbk [T]v+\Bbbk e$, $Te=0$, where
\[
 (v \oo\lambda v ) = f(\lambda )e.
\]
\end{example}

For $f(\lambda )=\lambda $, the quotient of $U_{\mathrm{Vert}}(H_\lambda )$ modulo
the ideal generated by $e(-1)\mathbf 1 -\mathbf 1$
is known as the {\em Heisenberg vertex algebra}.

From the GSB point of view, there is no much difference
between the vertex envelopes of $H_\lambda $ and $H_f$ for an arbitrary odd
polynomial~$f$.
In particular, the linear basis of
$U_{\mathrm{Vert}}(H_\lambda )/ (e(-1)\mathbf 1 -\mathbf 1)$
consists of the words \eqref{eq:Normal-1-generated}.

Note that the Weyl vertex algebra is also a quotient of the 1-dimensional
central extension 
\eqref{eq:CentralExt}
of rank two Abelian Lie conformal algebra $L$ spanned by $x$, $y$
so that $\varphi_{x,x}=\varphi_{y,y}=0$, $\varphi_{x,y}=-\varphi_{y,x}=1$.
Then $(x\oo 0 y)= e$ in $E$, thus $U_{\mathrm {Vert}}(E)/(e-\mathbf 1)$ is the Weyl vertex algebra.

In order to compute GSB in a more complicated example, 
let us modify Example~\ref{exmp:WeylVertex} as follows.

\begin{example}\label{exmp:CommVertex}
Let $X=\{x,y\}$, $N(x,x)=N(y,y)=0$, $N(x,y)=N(y,x)=1$, 
and $R=\{x.y-y.x-\mathbf 1\}$.
\end{example}

In order to find GSB of $V = \mathrm {Vert}(X,N\mid R)$
note that $x.y-y.x = T(x\oo 0 y)$ by \eqref{eq:VertexComm}.
Let us add new generator $e=(x\oo{0} y)$ such that $T^2e=0$,
and consider central extension 
\[
0\to \Bbbk e+\Bbbk Te \to E \to L\to 0,
\]
where $L$ is the Abelian Lie conformal algebra freely generated over $\Bbbk [T]$ by $x$, $y$. Hence, $V\cong U_{\mathrm{Vert}}(E)/(Te-\mathbf 1)$.

The GSB of $U_{\mathrm{Vert}}(E)$ is easy to find 
(in a similar way as in Theorem~\ref{thm:VertexPBW})
with the same ordering of generators $\mathcal X$.
The linear basis of this vertex envelope 
may be presented as
\begin{equation}\label{eq:U-basis}
u e(-1)^l e(-2)^m\mathbf 1,
\end{equation}
where $u=z_1(n_1)\dots z_k(n_k)$ as in \eqref{eq:Weyl-basis}, $k,l,m\ge 0$.
The new relation $Te=\mathbf 1$
represented as $e(-2)\mathbf 1 \to \mathbf 1$ 
does not produce new non-converging forks, so the basis of $V$
in Example \ref{exmp:CommVertex} consists of the same words 
\eqref{eq:U-basis}
with $m=0$.

\begin{remark}
 The left-symmetric 
 subalgebra generated in $V$ from
 Example~\ref{exmp:CommVertex} 
 by the set $X$ relative to the 
 operation $(\cdot . \cdot)$ is not associative.

Indeed, for example, the associator 
$(x.x).y - x.(x.y)$ is equal to 
\[
y(-1)x(-1)x(-1)\mathbf 1 - Ty(0)x(-1)x(-1)\mathbf 1 - x(-1)x(-1)y(-1)\mathbf 1
\]
by \eqref{eq:VertexComm} and \eqref{eq:WickIdent}.
Since $y(-1)x(-1)u\mathbf 1 = x(-1)y(-1)u\mathbf 1 - e(-2)u\mathbf 1$
and $y(0)x(-1)u\mathbf 1 = x(-1)y(0)u\mathbf 1 -e(-1)u\mathbf 1$,
we get 
$(x.x).y - x.(x.y) = 2x(-2)e(-1)\mathbf 1 \ne 0$.
\end{remark}

\section{Vertex envelopes of left-symmetric algebras}

Let $\mathrm{Vert} $ be the category of vertex algebras, 
and let $\mathrm{LieConf}$ and $\mathrm{LSym}$ stand for the categories 
of Lie conformal and left-symmetric algebras, respectively. 
We have already considered the forgetful functor
$ \mathrm{Vert} \to \mathrm{LieConf}$
whose left adjoint functor is $U_{\mathrm{Vert}}(\cdot )$.
As follows from the definition, 
there is also a functor 
\[
\Psi : \mathrm{Vert}\to \mathrm{LSym }
\]
ignoring the $\lambda $-bracket on a vertex algebra.
Let us observe the properties of the functor~$\Psi $.
Recall that left-symmetric algebras are Lie-admissible, 
i.e., the operation $[a,b] = a.b-b.a$ turns $A\in \mathrm{LSym}$ into a Lie algebra $A^{(-)}$.

\begin{proposition}\label{prop:LieNilpA}
Let $A$ be a left-symmetric algebra embeddable into 
$\Psi (V)$ for a vertex algebra~$V$, 
and let there exists $N\in \mathbb Z_+$ such that 
$N_V(a,b)\le N$  for all $a,b\in A$.
Then $A$ is Lie-nilpotent.
\end{proposition}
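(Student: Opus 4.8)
The plan is to work directly inside the vertex algebra $V$, identifying $A$ with its image in $\Psi(V)$, so that the Lie bracket on $A^{(-)}$ is $[a,b]=a.b-b.a=\int_{-T}^0(a\oo\lambda b)\,d\lambda$ by \eqref{eq:VertexComm}. The two structural facts I want to combine are: (i) every such bracket lands in the image of $T$, so iterated brackets land in higher and higher powers of $T$; and (ii) the locality bound $N$ forbids an element of $A$ that is divisible by too large a power of $T$ from having a nonzero $\lambda$-bracket with anything in $A$. Together these should squeeze the lower central series down to zero.

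First I would prove a filtration lemma: writing $[u,v]:=\int_{-T}^0(u\oo\lambda v)\,d\lambda$ for arbitrary $u,v\in V$, one has $[T^iV,T^jV]\subseteq T^{i+j+1}V$. This is a routine sesquilinearity computation: for $u=T^iu_0$, $v=T^jv_0$ the axioms give $(u\oo\lambda v)=(-\lambda)^i(T+\lambda)^j(u_0\oo\lambda v_0)$, and since $\int_{-T}^0\lambda^m\,d\lambda$ is a scalar multiple of $T^{m+1}$, each monomial of the integrand acquires at least the factor $T^{i+j+1}$. Because $A$ is a left-symmetric subalgebra it is closed under $[\,\cdot\,,\cdot\,]$, so the lower central series $\gamma_1=A$, $\gamma_{k+1}=[A,\gamma_k]$ stays inside $A$; feeding the filtration lemma into an induction (with $A\subseteq V=T^0V$) then yields $\gamma_k\subseteq A\cap T^{k-1}V$.

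Next I would establish the centrality step, which is where the locality hypothesis actually enters. Suppose $a\in A\cap T^NV$, say $a=T^Nc$. Then for every $b\in A$ sesquilinearity gives $(b\oo\lambda a)=(T+\lambda)^N(b\oo\lambda c)$. If $(b\oo\lambda c)\neq 0$, this polynomial has $\lambda$-degree at least $N$, so $N_V(b,a)\ge N+1$, contradicting the hypothesis $N_V(b,a)\le N$. Hence $(b\oo\lambda c)=0$, so $(b\oo\lambda a)=0$ and therefore $[b,a]=0$ for all $b\in A$. In other words, $A\cap T^NV\subseteq Z(A^{(-)})$, the centre of the Lie algebra $A^{(-)}$.

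Combining the two steps finishes the argument: $\gamma_{N+1}\subseteq A\cap T^NV\subseteq Z(A^{(-)})$, whence $\gamma_{N+2}=[A,\gamma_{N+1}]=0$, so $A^{(-)}$ is nilpotent of class at most $N+1$. I expect the main (indeed the only) substantive point to be the centrality step, since it is the sole place the bound $N$ is used; the filtration lemma is a mechanical consequence of sesquilinearity. The one subtlety I would take care to verify is that no cancellation lowers the $\lambda$-degree of $(T+\lambda)^N(b\oo\lambda c)$, which holds because its top $\lambda$-coefficient equals the nonzero top coefficient of $(b\oo\lambda c)$.
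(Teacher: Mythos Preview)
Your proof is correct and follows essentially the same idea as the paper: iterated commutators are pushed into higher and higher powers of $T$ via sesquilinearity and \eqref{eq:VertexComm}, and the uniform locality bound then forces the $\lambda$-bracket (hence the Lie bracket) to vanish. The only notable difference is in the centrality step: the paper applies sesquilinearity in the \emph{first} argument, observing that $a\in T^{k}V$ implies $(a\oo\lambda b)\in\lambda^{k}V[\lambda]$, so once $k\ge N$ the polynomial is simultaneously divisible by $\lambda^{N}$ and of degree $\le N-1$, hence zero; you instead use the second argument, getting $(b\oo\lambda a)=(T+\lambda)^{N}(b\oo\lambda c)$ and then need the extra remark that the leading $\lambda$-coefficient cannot cancel. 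Both are fine, but the first-argument route is slightly cleaner and makes your final ``no cancellation'' check unnecessary.
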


The latter condition means the commutator Lie algebra $A^{(-)}$
is nilpotent.

\begin{proof}
Assume $A$ embeds into $\Psi(V)$ for an appropriate vertex algebra~$V$. If $(a\oo\lambda b) \in \lambda^k V[\lambda ]$
for some $a,b\in A$, $k\in \mathbb Z_+$, then \eqref{eq:VertexComm} 
implies
\[
[a,b] = a.b - b.a \in T^{k+1} V.
\]
Hence, $([a,b]\oo\lambda A)\in \lambda^{k+1}V[\lambda ]$. 
Therefore, 
$A^{(n)} = [\dots [[A,A],A], \dots , A]\subset A$ has the following 
property: for every $a\in A^{(n)}$ and for every $b\in A$
$\deg_\lambda (a\oo\lambda b)\ge n+2$. Since the degree 
of the $\lambda $-bracket 
is uniformly bounded by a constant $N-1$, we have $A^{(N+1)} = 0$.
\end{proof}

\begin{remark}
Proposition \ref{prop:LieNilpA} and its proof are easy to 
generalize for superalgebras.
\end{remark}

For example, if a finite-dimensional left-symmetric algebra
$A$ embeds into a vertex algebra then the conditions 
of Proposition \ref{prop:LieNilpA} hold.
It was pointed out in \cite{Burde} that neither simple left-symmetric algebra can be Lie-nilpotent. Hence we obtain

\begin{corollary}
A simple finite-dimensional left-symmetric algebra 
cannot be embedded into a vertex algebra.
\end{corollary}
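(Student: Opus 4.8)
The plan is to obtain the statement as an immediate consequence of Proposition~\ref{prop:LieNilpA} together with the structural fact about simple left-symmetric algebras recorded in \cite{Burde}. I would argue by contradiction: suppose a simple finite-dimensional left-symmetric algebra $A$ does embed into $\Psi(V)$ for some vertex algebra $V$, and identify $A$ with its image in $\Psi(V)$, so that the $\lambda$-bracket of $V$ is defined on $A$ and the locality function $N_V$ makes sense on $A\times A$.

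The only hypothesis of Proposition~\ref{prop:LieNilpA} that is not automatic is the existence of a uniform bound $N\in\mathbb Z_+$ with $N_V(a,b)\le N$ for all $a,b\in A$. The first step is therefore to check that such a bound is forced by finite-dimensionality. Fix a $\Bbbk$-basis $e_1,\dots,e_d$ of $A$. Since the $\lambda$-bracket of $V$ is bilinear, for $a=\sum_i\alpha_i e_i$ and $b=\sum_j\beta_j e_j$ one has
\[
(a\oo\lambda b)=\sum_{i,j}\alpha_i\beta_j\,(e_i\oo\lambda e_j),
\]
so that $\deg_\lambda(a\oo\lambda b)\le \max_{i,j}\deg_\lambda(e_i\oo\lambda e_j)$, and hence $N_V(a,b)\le \max_{i,j}N_V(e_i,e_j)=:N$. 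As the basis is finite, $N$ is a well-defined non-negative integer bounding $N_V$ on all of $A\times A$.

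With this bound in hand, Proposition~\ref{prop:LieNilpA} applies directly and yields that the commutator Lie algebra $A^{(-)}$ is nilpotent, i.e., $A$ is Lie-nilpotent. The final step is to invoke the observation of \cite{Burde} that no simple left-symmetric algebra can be Lie-nilpotent. This contradicts the conclusion just obtained, so the assumed embedding cannot exist.

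The deduction is short, and I expect the only step requiring any care to be the boundedness of the locality function, which reduces the finite-dimensional case to the hypothesis of Proposition~\ref{prop:LieNilpA}. Even that step is routine once bilinearity of the $\lambda$-bracket is used, so I do not anticipate a genuine obstacle: the real content lies in Proposition~\ref{prop:LieNilpA}, which supplies the bridge from the vertex-algebraic locality bound to Lie-nilpotency, and in the cited property of simple left-symmetric algebras.
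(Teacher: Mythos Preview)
Your argument is correct and matches the paper's approach exactly: the paper also notes that finite-dimensionality forces the uniform locality bound needed for Proposition~\ref{prop:LieNilpA}, then invokes \cite{Burde} to reach a contradiction. You have simply spelled out the bilinearity argument for the bound a bit more explicitly than the paper does.
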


An associative and commutative algebra $A$
(with abelian $A^{(-)}$)
obviously embeds into a vertex algebra: 
it is enough to define identically zero $\lambda $-bracket,
i.e., set $N=N(a,b)=0$ for all $a,b\in A$.
In this case, a vertex algebra turns into 
just differential commutative algebra.

It is reasonable to suspect that 3-nilpotence of 
the commutator Lie algebra 
$A^{(-)}$ for a left-symmetric algebra $A$
is enough to claim  an 
embedding of $A$ into a vertex algebra $V$ such that 
$N_V(A,A)\le 1$. 
In the rest of this section, we show 
that this is not true. 

Assume $A$ is a left-symmetric algebra with a linear basis 
$X$, and $A^{(-)}$ is 3-nilpotent. 
Then 
$V(A,1) = \mathrm{Vert}(X, N=1\mid x.y-xy, \ x,y\in X)$
is the universal object in the class of vertex 
envelopes $V$ of $A$ with $N_V(a,b)\le 1$
for all $a,b\in A$.

\begin{lemma}\label{lem:00-null}
For all $x,y,z\in X$ we have $T^2x(0)y(0)z(-1)\mathbf 1 =0$ 
in $V(A,1)$.
\end{lemma}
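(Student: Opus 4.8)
The plan is to translate the operator expression into iterated $\lambda$-brackets and let the constraint $N=1$ do the work. Using $x(n)a=(x\oo n a)$ for $n\ge 0$ together with $z(-1)\mathbf 1=z$, the element to be killed is $(x\oo 0(y\oo 0 z))$. The first thing I would record is a consequence of sesquilinearity: evaluating $(Ta\oo\lambda b)=-\lambda(a\oo\lambda b)$ at $\lambda=0$ gives $(Ta\oo 0 b)=0$, so that $T(a\oo 0 b)=(Ta\oo 0 b)+(a\oo 0 Tb)=(a\oo 0 Tb)$; in words, $T$ may be moved into the second slot of a $0$-product. Since $N\le 1$ on $X\times X$ forces $(y\oo\lambda z)=(y\oo 0 z)$, relation \eqref{eq:VertexComm} reads $T(y\oo 0 z)=y.z-z.y=[y,z]$, and by the defining relation $x.y=xy$ this commutator is computed inside $A$, i.e.\ $[y,z]=[y,z]_A$ is a finite linear combination of basis vectors. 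Applying the ``push $T$ inward'' rule twice I would reduce
\[
T^2(x\oo 0(y\oo 0 z))=T(x\oo 0[y,z])=(x\oo 0\,T[y,z]).
\]

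The crux is then to prove that the $\lambda$-bracket of $x$ with $[y,z]$ vanishes identically, which immediately gives $(x\oo 0\,T[y,z])=T(x\oo 0[y,z])=0$. I would obtain this by colliding two bounds on the bracket $([y,z]\oo\mu x)$. On one side, $[y,z]=T(y\oo 0 z)\in TV$, so sesquilinearity yields $([y,z]\oo\mu x)=-\mu\,((y\oo 0 z)\oo\mu x)$, a polynomial in $\mu$ with vanishing constant term. On the other side, both $[y,z]$ and $x$ belong to $A$, so bilinearity of the $\lambda$-bracket together with $N\le 1$ on $X$ forces $\deg_\mu([y,z]\oo\mu x)\le 0$. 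A polynomial divisible by $\mu$ and of degree at most $0$ is zero, hence $([y,z]\oo\mu x)=0$; the skew symmetry $(x\oo\lambda[y,z])+([y,z]\oo{T+\lambda}x)=0$ then gives $(x\oo\lambda[y,z])=0$. Conceptually this is just the $2$-step nilpotency of $A^{(-)}$ that is automatic in any envelope with locality $\le 1$, compare Proposition~\ref{prop:LieNilpA}.

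The step I expect to be the main obstacle is guaranteeing the degree bound $\deg_\mu([y,z]\oo\mu x)\le 0$: this is legitimate only because $[y,z]_A$ is a genuine element of $A$, so that its locality with $x$ is governed by the locality of generators rather than by the Dong-lemma estimate one would use for a composite element; the divisibility-by-$\mu$ bound is then what clashes with it. Everything else is routine manipulation with sesquilinearity, skew symmetry, and \eqref{eq:VertexComm}.

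Finally, I would remark that the same computation actually yields the stronger vanishing $x(0)y(0)z(-1)\mathbf 1=0$: since $(x\oo\lambda\,T(y\oo 0 z))=(T+\lambda)(x\oo\lambda(y\oo 0 z))$ by sesquilinearity while its left-hand side equals $(x\oo\lambda[y,z])=0$, the relation $(T+\lambda)f(\lambda)=0$ for the polynomial $f(\lambda)=(x\oo\lambda(y\oo 0 z))$ forces $f\equiv 0$ upon comparing top coefficients, so in particular $f(0)=(x\oo 0(y\oo 0 z))=0$. The weaker $T^2$-form stated in the lemma is all that is needed in the sequel.
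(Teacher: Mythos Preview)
Your proof is correct, and the overall shape is the same as the paper's: both arguments push the $T$'s inward to rewrite $T^2x(0)y(0)z(-1)\mathbf 1$ as $T(x\oo 0[y,z])$ (equivalently, $[x,[y,z]]$ in the paper's operator shorthand). The difference lies in how the final vanishing is obtained. The paper simply invokes the standing hypothesis that $A^{(-)}$ is $3$-nilpotent, so $[x,[y,z]]_A=0$ and we are done in one line. You instead squeeze $([y,z]\oo\mu x)$ between the divisibility by $\mu$ coming from $[y,z]\in TV$ and the degree bound $\le 0$ coming from $[y,z]\in A$ together with $N\le 1$ on generators---which is exactly the mechanism of Proposition~\ref{prop:LieNilpA} specialized to $N=1$. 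This buys you a little more: your argument shows that the $3$-nilpotence hypothesis is actually redundant for the lemma (it holds automatically in $V(A,1)$), and your final remark that $(T+\lambda)f(\lambda)=0$ forces $f\equiv 0$ yields the genuine strengthening $x(0)y(0)z(-1)\mathbf 1=0$, which the paper neither states nor needs. The paper's version is shorter; yours is more self-contained and slightly sharper.
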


\begin{proof}
Use \eqref{eq:VertexComm} and 
multiplication rules like
$y(-1)z(-1)\mathbf 1 \to (yz)(-1)\mathbf 1$
to derive
\begin{multline*}
T^2x(0)y(0)z(-1)\mathbf 1 =Tx(0)y(0)z(-2)\mathbf 1 
= Tx(0)[y(-1),z(-1)]\mathbf 1 \\
= Tx(0)[y,z](-1)\mathbf 1 
= [x,[y,z]](-1)\mathbf 1 = 0. 
\end{multline*}
\end{proof}

Let us denote $\circlearrowleft F(a,b,c) = F(a,b,c)+F(b,c,a)+F(c,a,b)$
for an algebraic expression $F$ in three variables.

\begin{proposition}
For all $a,b,c \in A$ we have 
\begin{equation}\label{eq:CircleAss}
\circlearrowleft (a,b,c)_. = 0 
\end{equation}
in $V(A,1)$,
where
$(a,b,c)_. = (a.b).c - a.(b.c)$.
\end{proposition}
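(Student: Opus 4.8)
The plan is to strip the $\lambda$-bracket down to its $0$-product and reduce the cyclic associator to a combination of iterated $0$-brackets that vanishes by antisymmetry and Jacobi. I would start from the purely combinatorial regrouping, valid for any single bilinear product $(\cdot.\cdot)$,
\[
\circlearrowleft (a,b,c)_. = [a.b,c]+[b.c,a]+[c.a,b],\qquad [u,v]:=u.v-v.u,
\]
which follows by collecting the six summands of the three associators; by multilinearity it suffices to take $a,b,c\in X$. The key structural observation is that every slot of these commutators lies in $A$: by the defining relation $x.y-xy$ the product $a.b$ coincides with the $A$-product $ab$, and $A$ is closed under multiplication, so $a.b\in A$. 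Since $N\equiv 1$ on $X$, bilinearity yields $N_V(u,v)\le 1$ for all $u,v\in A$; hence $(u\oo\lambda v)$ is $\lambda$-independent and \eqref{eq:VertexComm} degenerates to $[u,v]=T(u\oo 0 v)$. Writing $\{u,v\}:=(u\oo 0 v)$ I obtain
\[
\circlearrowleft (a,b,c)_. = T\big(\{a.b,c\}+\{b.c,a\}+\{c.a,b\}\big).
\]

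Next I would use three facts about $\{\cdot,\cdot\}$ on $A$: it is antisymmetric (conformal skew symmetry at $N=1$), it satisfies the Jacobi identity (set $\lambda=\mu=0$ in \eqref{eq:JacobiConf}), and each $(x\oo 0\cdot)$ is a derivation of $(\cdot.\cdot)$ (the $\lambda^0$-component of \eqref{eq:WickIdent}). Expanding $\{a.b,c\}=-\{c,a\}.b-a.\{c,b\}$ together with its two cyclic companions and regrouping the six resulting terms, I would fold them into
\[
\{a.b,c\}+\{b.c,a\}+\{c.a,b\} = [a,\{b,c\}]+[b,\{c,a\}]+[c,\{a,b\}].
\]

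The last step expands each commutator once more. Combining \eqref{eq:VertexComm} with \eqref{eq:JacobiConf} (at $\mu=0$) gives
\[
[a,\{b,c\}] = T\{b,\{a,c\}\}+[\{a,b\},c].
\]
Summed cyclically, the level-two terms give $T\big(\{b,\{a,c\}\}+\{c,\{b,a\}\}+\{a,\{c,b\}\}\big)$, which by antisymmetry of the inner brackets equals $-T\big(\{a,\{b,c\}\}+\{b,\{c,a\}\}+\{c,\{a,b\}\}\big)$, i.e. $-T$ times the Jacobiator, and hence vanishes; equivalently, after the outer $T$ each summand becomes $T^2 b(0)a(0)c(-1)\mathbf 1=0$ by Lemma~\ref{lem:00-null}. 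The residual terms $[\{a,b\},c]$ vanish because the level-one brackets are central: sesquilinearity gives $(T\{a,b\}\oo\lambda c)=-\lambda(\{a,b\}\oo\lambda c)$, and since $T\{a,b\}=a.b-b.a\in A$ the left side has $\lambda$-degree $0$; comparing coefficients forces $(\{a,b\}\oo\lambda c)=0$, whence $[\{a,b\},c]=0$. Therefore the inner sum is $0$ and $\circlearrowleft(a,b,c)_.=0$.

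I expect the main obstacle to be twofold. The combinatorial folding in the middle step requires careful sign bookkeeping, since antisymmetry, the derivation rule, and Jacobi must be applied repeatedly to compress six terms into three clean commutators. More essentially, the crux is the centrality claim $(\{a,b\}\oo\lambda c)=0$: it is precisely this that removes the single-$T$ remainder, leaving only the $T^2$-torsion level-two brackets governed by Lemma~\ref{lem:00-null}, which then cancel by the Jacobi identity.
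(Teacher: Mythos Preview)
Your proof is correct and follows the same overall architecture as the paper's: both rewrite $\circlearrowleft(a,b,c)_.$ as a cyclic sum of commutators $[ab,c]$, use that $N_V\le 1$ on $A$ to reduce each commutator to a single $T(\cdot\oo 0\cdot)$, apply the $0$-th Wick identity to fold the result into $\circlearrowleft[a,(c\oo 0 b)]$, and finally invoke Lemma~\ref{lem:00-null} to kill the $T^2$-torsion piece.

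The one genuine difference is in how the residual ``non-$T^2$'' part is disposed of. The paper works at the level of coefficient operators: it writes $(a\oo\lambda(c\oo 0 b))=a(0)c(0)b(-1)\mathbf 1+\lambda\, a(1)c(0)b(-1)\mathbf 1$ directly from locality in $\mathcal A(X,1)$, then rewrites the linear-in-$\lambda$ coefficient as $a(1)[c,b](-1)\mathbf 1$ and kills it by $N_V(a,[c,b])\le 1$. You instead use the conformal Jacobi identity to split $(a\oo\lambda\{b,c\})=\{b,\{a,c\}\}+(\{a,b\}\oo\lambda c)$ and then eliminate the second summand by your centrality observation: since $T\{a,b\}=[a,b]\in A$ has locality $\le 1$ with $c$, sesquilinearity forces $(\{a,b\}\oo\lambda c)=0$. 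This degree argument is a clean, coordinate-free substitute for the paper's explicit coefficient manipulation; it also makes transparent why the remaining term reduces exactly to the $T^2 b(0)a(0)c(-1)\mathbf 1$ form governed by Lemma~\ref{lem:00-null} (and, as you note, the cyclic sum already vanishes by the $0$-product Jacobi identity even before applying the lemma). Both routes are short; yours trades a small computation for a structural observation.
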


\begin{proof}
By definition, 
$\circlearrowleft (a,b,c)_. =(ab).c - a.(bc) + (bc).a-b.(ca)
+(ca).b - c.(ab) = [ab,c] + [bc, a] + [ca, b]$
for all $a,b,c \in A$.
It follows from \eqref{eq:VertexComm}
that 
\[
[ab,c] = -\int\limits_{-T}^0 (c\oo\lambda ab)\,d \lambda 
= -T(c\oo 0 (a.b)).
\]
Apply \eqref{eq:WickIdent} to get 
$(c\oo 0 (a.b)) = a.(c\oo 0 b) + (c\oo 0 a).b$, 
so 
$\circlearrowleft (c\oo 0 (a.b)) 
= a.(c\oo 0 b) + (c\oo 0 a).b
+ b.(a\oo 0 c) + (a\oo 0 b).c
+ c.(b\oo 0 a) + (b\oo 0 c).a
= \circlearrowleft [a,(c\oo 0 b)]$
since $(x\oo 0 y) = -(y\oo 0 x)$
for all $x,y\in A$.

Hence, 
\[
\circlearrowleft (a,b,c)_. = -T \circlearrowleft [a,(c\oo 0 b)]=
-T\circlearrowleft \int\limits_{-T}^0 (a\oo\lambda (c\oo 0 b)).
\]
Note that 
$a(n)c(0)b(-1)\mathbf 1 = 0$ for $n\ge 2$ due to locality relations in the corresponding algebra $\mathcal A(X,1)$.
Therefore, 
$(a\oo\lambda (c\oo 0 b)) = a(0)c(0)b(-1)\mathbf 1 + \lambda
a(1)c(0)b(-1)\mathbf 1$, and 
\begin{multline*}
T\int\limits_{-T}^0 (a\oo\lambda (c\oo 0 b)) 
= T^2 \left (
 a(0)c(0)b(-1)\mathbf 1 - \dfrac{T}{2} a(1)c(0)b(-1)\mathbf 1
\right) \\
= T^2 \left (
\dfrac{3}{2} a(0)c(0)b(-1)\mathbf 1
- \dfrac{1}{2} a(1)c(0)b(-2)\mathbf 1
\right)
\end{multline*}
The first summand gives us zero by Lemma~\ref{lem:00-null}, 
the second one may be represented as 
$a(1)[c,b](-1)\mathbf 1 = 0$ by locality.
Hence, $\circlearrowleft (a,b,c)_. = 0$.
\end{proof}

It remains to note that 
\eqref{eq:CircleAss} is not a corollary of left-symmetric 
identity and 3-nilpotence of the commutator Lie algebra.
For example, let $A$ be a 3-dimensional algebra spanned by 
$x$, $y$, $z$ with the following multiplication table:
\[
xx=x+y,\quad xy=(1-\gamma)z,\quad yx=-\gamma z, \quad yy=\gamma z,
\]
where $\gamma \in \Bbbk $ and 
other products are zero. 
This is a left-symmetric algebra with nilpotent $A^{(-)}$, 
but $\circlearrowleft (x,x,x) = 3[x+y,x] = -3z\ne 0$. 
Hence, $A $ cannot be embedded into a vertex algebra $V$
with $N_V(A,A)\le 1$.

\end{document}